\definecolor{darkblue}{RGB}{0,0,160}
\theoremstyle{plain}
\newtheorem{thm}{Theorem}[section]
\newtheorem{prop}[thm]{Proposition}
\newtheorem{cor}[thm]{Corollary}
\newtheorem*{thm*}{Theorem \ref{thm:main}}
\newtheorem*{lemma*}{Lemma}
\newtheorem*{prop*}{Proposition}
\newtheorem*{cor*}{Corollary}
\newtheorem*{conj*}{Conjecture}
\theoremstyle{definition}
\newtheorem{defn}[thm]{Definition}
\newtheorem{ex}[thm]{Example}
\theoremstyle{remark}
\newtheorem*{rmk}{Remark}
\newcommand{\pp}{\mathbb{P}}
\newcommand{\qq}{\mathbb{Q}}
\newcommand{\rr}{\mathbb{R}}
\newcommand{\kk}{\mathbb{K}}
\newcommand{\ff}{\mathbb{F}}
\newcommand{\cali}{\mathcal{I}}
\newcommand{\ind}{\mbox{$\perp \kern-5.5pt \perp$}}
\newcommand{\Frac}{\mathrm{Frac}}
\begin{document}

\title{Algebraic Tools for the Analysis of State Space Models}

\author{Nicolette Meshkat}
\address{Department of Mathematics and Computer Science, 
Santa Clara University, Santa Clara, CA} 
\email{nmeshkat@scu.edu}
\author{Zvi Rosen}
\address{Department of Mathematics, University of Pennsylvania, Philadelphia, PA} 
\email{zvihr@math.upenn.edu}
\author{Seth Sullivant}
\address{Department of Mathematics,  North Carolina State University, Raleigh, NC}
\email{smsulli2@ncsu.edu}


           

\begin{abstract}
We present algebraic techniques to analyze state space models
in the areas of structural identifiability, observability, and indistinguishability.  
While the emphasis is on surveying existing algebraic tools for 
studying ODE systems, we also present a variety of new results.  In particular:
On structural identifiability, we present a method using linear algebra to find identifiable functions of the parameters of a model for unidentifiable models.  On observability, we present techniques using Gr\"obner bases and algebraic matroids to test algebraic observability of state space models.  On indistinguishability, we present a sufficient condition for distinguishability using computational algebra and demonstrate testing indistinguishability.
\keywords{Identifiability \and Observability \and Indistinguishability \and State space models}
\end{abstract}

\maketitle
\tableofcontents


\section{Introduction}

Consider a dynamic systems model in the following state space form:
\begin{equation}\label{eq:basicssm}
x'(t)=f(x(t),p,u(t)) \quad \quad \quad
y(t)=g(x(t),p)
\end{equation}
Here $x(t)$ is the state variable vector, 
$u(t)$ is the input vector (or control vector), $y(t)$ is the output vector, 
and $p$ is a parameter vector $(p_1,...,p_n)$ composed of 
unknown real parameters $p_1,...,p_n$.  In this modeling framework
the only  observed quantities are the input
 and output trajectories, $u(t)$ and $y(t)$
(or more realistically, the trajectories observed at some finite number of 
time points $t_1, t_2, \ldots$), 
together with the underlying modeling structure
(that is, the functions $f$ and $g$).
State space models are widely used throughout the applied sciences,
including the areas of control \cite{Fornasini,Pernebo,Roesser,Verhaegen}, systems biology \cite{DiStefano}, economics and finance \cite{Hamilton, Zeng}, and probability and statistics \cite{Carter, Kitagawa}.

A simple example of a state space model is a linear compartment model.

\begin{ex} \label{ex:2comp} Consider the following ODE:
$$
\begin{pmatrix} 
x_1' \\
x_2' \end{pmatrix} = {\begin{pmatrix} 
-(a_{01}+a_{21}) & a_{12} \\
a_{21} & -(a_{02}+a_{12}) 
\end{pmatrix}} {\begin{pmatrix}
x_1 \\
x_2 \end{pmatrix} } + {\begin{pmatrix}
u_1 \\
0 \end{pmatrix}}
\quad \quad y_1=x_1.$$ 
This model is called the linear 2-compartment model and will be referenced in later sections.  Here $(x_1(t),x_2(t))$ is the state variable vector, 
$u_1(t)$ is the input (or control), $y_1(t)$ is the output, 
and $(a_{01},a_{02},a_{12},a_{21})$ is the unknown parameter vector.
\end{ex}

Although the analysis of the behavior and use of state space models
falls under the dynamical systems research area umbrella,
tools from algebra can be used to analyze these models when
the functions $f$ and $g$ are rational functions.  Algebraic methods
typically focus on
determining which key features that the models satisfy  \emph{a priori}
before the models are used to analyze data.  The point of the
present paper is to give an overview of these algebraic techniques
to show how they can be applied to analyze state space models.
We focus on three main problems where algebraic techniques can be helpful:
determining structural identifiability, observability, and 
indistinguishability of the models.  We provide an overview
of techniques for these problems coming from computational algebra and 
we also introduce some new results coming from matroid theory.


\section{State Space Models}

In this section, we provide a more detailed introduction to state
space models, and the basic theoretical problems of
identifiability, observability, and indistinguishability
that we will address in this paper.  We also provide a detailed
introduction to the linear compartment models that will
be an important set of examples that we use to illustrate the
theory.

Consider a general state space model
\begin{equation}\label{eq:statespacebasic}
x'(t)=f(x(t),p,u(t)) \quad \quad \quad
y(t)=g(x(t),p)
\end{equation}
as in the introduction, with $x(t) \in \rr^N$,  $y(t) \in \rr^M$, $u(t) 
\in \rr^R$ and $p \in \rr^n$.    

The state space model (\ref{eq:statespacebasic}) is called \emph{identifiable} if 
the unknown parameter vector $p$ can be recovered from observation of
the input and output alone.  The model is \emph{observable} if
the trajectories of the state space variables $x(t)$ can be 
recovered from observation of the input and output alone.
Two state space models are \emph{indistinguishable} if for any
choice of parameters in the first model, there is a choice of parameters
in the second model that will yield the same dynamics in both models, and
vice versa.  Before getting into the technical details of these
definitions for state space models, we introduce some key examples of 
state space models that we will use to illustrate the main concepts throughout the paper.

\begin{ex}[SIR Model]\label{ex:SIR1}
A commonly used model  in epidemiology is the Susceptible-Infected-Recovered model (SIR model)  (\cite{Beretta},\cite{Bjornstad},\cite{Capasso},\cite{McCluskey},\cite{Shulgin})
which has the following form:
\begin{align*}
 S'   & = \mu N - \beta S I - \mu S  \\
 I' & = \beta S I - (\mu + \gamma) I \\
 R'& = \gamma I - \mu R  \\
 y & = k I 
\end{align*}
The interpretation of the state variables is that
$S(t)$ is the number of susceptible individuals at time $t$, 
$I(t)$ is the number of infected individuals at time $t$, and
$R(t)$ is the number of recovered individuals at time $t$.  
The unknown parameters are the birth/death rate $\mu$, 
the transmission parameter $\beta$, 
the recovery rate $\gamma$,
the total population $N$, and the proportion of the infected population measured $k$.
In this model, we assume that we only observe the trajectory $y(t)$, an (unknown) proportion of the infected population.
Note that this simple model has no input/control.

Identifiability and observability analysis in this model are concerned with determining
which unmeasured quantities can be determined from only the observed output trajectory $y$.
Identifiability specifically concerns the unobserved parameters $\mu, \beta, \gamma, N$, and $k$,
whereas observability specifically is concerned with the unobserved state variables $S$, $I$, and $R$.  \qed
\end{ex}

A commonly used family of state space models are the linear compartment models.
We outline these models here.
Let $G = (V,E) $ be a directed graph with vertex set $V$ and set of 
directed edges $E$.  Each vertex $i \in V$ corresponds to a
compartment in our model and each edge $j \rightarrow i$ corresponds to  
a direct flow of material from the $j$th compartment to the
$i$th compartment. Let
$In, Out, Leak \subseteq V$ be three sets of compartments: the
set of input compartments, output compartments, and leak compartments
respectively.  To each edge $j \rightarrow i$ we associate
an independent parameter $a_{ij}$, the rate of flow
from compartment $j$ to compartment $i$.  
To each leak node $i \in Leak$, we associate an independent
parameter $a_{0i}$, the rate of flow from compartment $i$ leaving the system.

To such a graph $G$ and set of leaks $Leak$ we associate the matrix $A(G)$ 
 in the following way:
\[
  A(G)_{ij} = \left\{ 
  \begin{array}{l l l}
    -a_{0i}-\sum_{k: i \rightarrow k \in E}{a_{ki}} & \quad \text{if $i=j$ and } i \in Leak\\
        -\sum_{k: i \rightarrow k \in E}{a_{ki}} & \quad \text{if $i=j$ and } i \notin Leak\\
    a_{ij} & \quad \text{if $j\rightarrow{i}$ is an edge of $G$}\\
    0 & \quad \text{otherwise}\\
  \end{array} \right.
\]
For brevity, we will often
 use $A$ to denote $A(G)$.
Then we construct a system of linear ODEs with inputs and outputs associated to the quadruple
$(G, In, Out, Leak)$ as follows:
\begin{equation} \label{eq:main}
x'(t)=Ax(t)+u(t)  \quad \quad y_i(t)=x_i(t)  \mbox{ for } i \in Out
\end{equation}
 where $u_{i}(t) \equiv 0$ for $i \notin In$.
 The coordinate functions $x_{i}(t)$ are the state variables, the 
 functions $y_{i}(t)$ are the output variables, and the nonzero functions $u_{i}(t)$ are
 the inputs.  The resulting model is called a   \textit{linear compartment model}.

We use the following convention for drawing linear compartment models \cite{DiStefano}.  Numbered vertices represent compartments, outgoing arrows
from the compartments represent leaks, an edge with a circle coming out of a compartment represents an output, 
and an arrowhead pointing into a compartment represents an input.  
 
\begin{figure}
\begin{center}
\resizebox{!}{3cm}{
\includegraphics{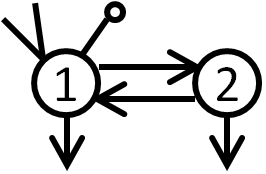}}
\end{center}\caption{A 2-compartment model with $In=\{1\}$, $Out=\{1\}$, and $Leak=\{1,2\}$.}
\label{fig:2comp}
\end{figure}

\begin{ex} \label{ex:2comp2}
For the compartment model in Figure \ref{fig:2comp}, the ODE system has the form given in Example \ref{ex:2comp}.  Since this model has a leak in every compartment, the diagonal
entries of $A(G)$ are algebraically independent of the other entries. 
In this situation,  we can re-write the diagonal entries of the matrix $A$ as $a_{11}=-(a_{01}+a_{21})$ and $a_{22}=-(a_{02}+a_{12})$.  Thus we have the following ODE system:
$$
\begin{pmatrix} 
x_1' \\
x_2' \end{pmatrix} = {\begin{pmatrix} 
a_{11} & a_{12} \\
a_{21} & a_{22} 
\end{pmatrix}} {\begin{pmatrix}
x_1 \\
x_2 \end{pmatrix} } + {\begin{pmatrix}
u_1 \\
0 \end{pmatrix}}
\quad \quad  y_1=x_1.$$ 
\end{ex}


\section{Differential Algebra Approach To Identifiability}

In this paper we focus on the  
\emph{structural} versions of identifiability, observability, and indistinguishability 
(that is, structural identifiability,
structural observability, structural indistinguishability).
That means we study when these properties hold assuming that we
are able to observe trajectories perfectly.  \emph{Practical} versions
of these problems concern how noise affects the ability to, e.g., infer
parameters of the models.  Structural answers are important because
the structural version of the condition is necessary to 
insure that the practical version holds.
On the other hand, practical versions of these problems depend on the
specific data dependent context in which the data might be observed,
and might further depend on the particular underlying unknown parameter
choices.  We will drop ``structural'' throughout the paper since this will
be implicit in the majority of our discussion.

To make the definitions of identifiability, observability, and indistinguishability
precise we will use tools from differential algebra.
In this approach, we must form the \textit{input-output equations} associated 
to our model by performing \emph{differential elimination}. 
We carry out operations in the differential ring 
\[
\qq(p)[x,y,u, x', y', u', \ldots]
\]
with the derivation $\frac{d}{dt}$ with respect to time 
such that the parameters $p$ are constants with respect to the derivation,
and $\frac{d}{dt} x = x'$, etc.  Differential algebra was
developed by Ritt \cite{Ritt1950} and Kolchin \cite{Kolchin1973} and
has its most well-known applications to the study of the algebraic solution
to systems of differential equations \cite{Singer2003}.

The goal of this differential elimination process for
state space models is to eliminate the state variables
$x(t)$ and their derivatives, so that the resulting equations are purely in terms 
of the input variables, output variables, and the parameters. 
The equations that result from applying the
differential elimination process are called the \emph{input-output equations}.
We obtain input-output equations in the following form:
\[
\sum_i{c_i(p) \psi_i (u,y)=0}
\]
where $c_i(p)$ are rational functions in the parameter vector $p$ and 
$\psi_i (u,y)$ are differential monomials in $u(t)$ and $y(t)$. 
Let $c=(c_1(p),...,c_m(p))$ denote the vector of coefficients 
of the input-output equations, which are rational functions in the parameter vector $p$.  
This coefficient vector induces a map $c:  \rr^n  \rightarrow  \rr^m$
called the coefficient map, that plays an important role in the study
of identifiability and indistinguishability.

For general state space models of the form (\ref{eq:statespacebasic})
we can also use ordinary Gr\"obner basis calculations to determine the 
input/output equation.

\begin{prop}\label{prop:grobnerio}
Consider a state space model of the form (\ref{eq:statespacebasic})
where $f$ and $g$ are polynomial functions and where there
are $N$ state-space variables, $M = 1$ output variable, and $R$ input
variables.
Let $P$ be the ideal 
\begin{multline*}
\langle x' - f(x,p,u), \, \ldots, \, 
x^{(N)} -  \frac{d^{N-1}}{dt^{N-1}}f(x,p,u), \,   
y - g(x,p), \, \ldots, \,  y^{(N)} - \frac{d^N}{dt^N}g(x,p) \rangle  \\
\subseteq \qq(p)[x,y,u, x', y', u', \ldots, x^{(N-1)}, y^{(N-1)}, u^{(N-1)}, x^{(N)}, y^{(N)}].
\end{multline*}
Then $P \cap \qq(p)[y, u, y', u', \ldots, y^{(N-1)}, u^{(N-1)}, y^{(N)}]$
is not the zero ideal and hence contains an input-output equation.
\end{prop}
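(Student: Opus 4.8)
The plan is a transcendence-degree count. Write $\mathcal{R}$ for the ambient polynomial ring $\qq(p)[x,y,u,x',y',u',\ldots,x^{(N)},y^{(N)}]$ that contains $P$, and $\mathcal{R}'=\qq(p)[y,u,y',u',\ldots,y^{(N-1)},u^{(N-1)},y^{(N)}]$ for the subring onto which we want to eliminate. The ideal $P\cap\mathcal{R}'$ is precisely the kernel of the composite $\qq(p)$-algebra homomorphism $\mathcal{R}'\hookrightarrow\mathcal{R}\twoheadrightarrow\mathcal{R}/P$, so it suffices to show this composite is not injective. For that it is enough to check that $\mathcal{R}/P$ is a domain with $\mathrm{trdeg}_{\qq(p)}(\mathcal{R}/P)<\mathrm{trdeg}_{\qq(p)}\mathcal{R}'$: if $P\cap\mathcal{R}'$ were zero then $\mathcal{R}'$ would embed in $\mathcal{R}/P$, forcing $\mathrm{trdeg}_{\qq(p)}\mathcal{R}'\le\mathrm{trdeg}_{\qq(p)}(\mathcal{R}/P)$. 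Here $\mathrm{trdeg}_{\qq(p)}\mathcal{R}'=(N+1)+RN$, since the outputs $y,y',\ldots,y^{(N)}$ contribute $N+1$ variables and the inputs $u,u',\ldots,u^{(N-1)}$ contribute $RN$.

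The core step is to compute $\mathcal{R}/P$ explicitly. The generators of $P$ form a triangular system: each is one of the variables $x',\ldots,x^{(N)}$ (all $N$ coordinates) or one of $y,y',\ldots,y^{(N)}$, minus a polynomial in the remaining variables of $\mathcal{R}$. The essential bookkeeping is that, because $f$ and $g$ are polynomial and $g$ does not involve $u$, the polynomial $\frac{d^i}{dt^i}f$ involves $x$-derivatives of order at most $i$ and $u$-derivatives of order at most $i$, while $\frac{d^j}{dt^j}g$ involves $x$-derivatives of order at most $j$, $u$-derivatives of order at most $j-1$, and no $y$'s at all; for $i\le N-1$ and $j\le N$ all of these lie in $\mathcal{R}$. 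Hence, working modulo $P$, I can substitute the variables $y^{(N)},\ldots,y',y,x^{(N)},\ldots,x'$ in that order, each by its defining right-hand side: the $y$-substitutions do not interact with one another, and each $x^{(i+1)}$ gets replaced by something of strictly lower $x$-order, so processing from highest order down terminates with every element of $\mathcal{R}$ reduced mod $P$ to a polynomial in $x_1,\ldots,x_N$ and $u,u',\ldots,u^{(N-1)}$ alone. That this reduction is well defined and unique is verified in the standard way, by building the surjection $\mathcal{R}\to\qq(p)[x_1,\ldots,x_N,u,u',\ldots,u^{(N-1)}]$ that fixes $x$ and the $u^{(\ell)}$ and sends, recursively, $x^{(i+1)}\mapsto\frac{d^i}{dt^i}f$ and $y^{(j)}\mapsto\frac{d^j}{dt^j}g$, and checking its kernel is exactly $P$. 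Thus $\mathcal{R}/P\cong\qq(p)[x_1,\ldots,x_N,u,u',\ldots,u^{(N-1)}]$ is a polynomial ring; in particular it is a domain and $\mathrm{trdeg}_{\qq(p)}(\mathcal{R}/P)=N+RN$.

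Now the count closes: $N+RN<(N+1)+RN$, so by the first paragraph $P\cap\mathcal{R}'$ is a nonzero (prime) ideal. Any nonzero element of $P\cap\mathcal{R}'$ is a $\qq(p)$-polynomial relation among $y,u$ and their derivatives, i.e.\ an input-output equation, which proves the proposition. Conceptually, the relation must exist because, for a fixed jet of the input, the order-$N$ output jet $(y,y',\ldots,y^{(N)})$ is a polynomial map out of the $N$-dimensional state space into $\rr^{N+1}$ and so cannot have Zariski-dense image; this single extra coordinate is exactly where the hypothesis $M=1$ enters (together with the polynomiality of $f$ and $g$, which is what keeps everything inside polynomial rings).

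I expect the only real work to be the bookkeeping in the second paragraph: confirming that the recursively defined map $\mathcal{R}\to\qq(p)[x_1,\ldots,x_N,u,u',\ldots,u^{(N-1)}]$ is well defined and has kernel exactly $P$ — equivalently, that the triangular reduction terminates and is independent of the order of substitution. Once $\mathcal{R}/P$ is pinned down as that polynomial ring, the rest is just the elementary fact that a homomorphism from a finitely generated domain into a domain of strictly smaller transcendence degree has nonzero kernel.
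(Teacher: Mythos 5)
Your proof is correct and takes essentially the same route as the paper: both arguments reduce the claim to the dimension count $\dim(\mathcal{R}/P)=N+RN<(N+1)+RN=\mathrm{trdeg}_{\qq(p)}\mathcal{R}'$, which forces a relation among $y,u$ and their derivatives. The only difference is how the dimension and primality of $P$ are justified---you exhibit $\mathcal{R}/P$ explicitly as the polynomial ring $\qq(p)[x,u,\ldots,u^{(N-1)}]$ via the triangular substitution, while the paper reads both facts off the initial ideal of a complete intersection; the two justifications are interchangeable.
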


Although Proposition \ref{prop:grobnerio} is known in the literature \cite{Forsman,Jirstrand},
we include a proof because it will illustrate some useful ideas that
we will use in other new results later on.  Note that although this
is stated for a single output, one can apply Proposition \ref{prop:grobnerio}
one output at a time to find input/output equations for each output
separately and hence obtain Proposition \ref{prop:grobneriomultiple}.

\begin{proof}
Note that $P$ is a prime ideal, since, with a carefully  chosen lexicographic term order,
it has as its initial ideal 
\[
\langle x', \ldots, x^{(N)}, y, \ldots, y^{(N)} \rangle
\] 
which is a prime ideal.  Since $P$ is prime, we can consider the
algebraic matroid associated to this ideal.
To say that $P \cap \qq(p)[y, u, y', u', \ldots, y^{(N-1)}, u^{(N-1)}, y^{(N)}]$
is not the zero ideal is equivalent to saying that the
set $\{y, u, y', u', \ldots, y^{(N-1)}, u^{(N-1)}, y^{(N)}\}$ is a dependent
set in the associated algebraic matroid.  The initial ideal
also shows that this ideal is a complete intersection, so it is has
codimension $N^2 + N + 1$ (since this is the number of equations involved).
The total number of variables in our polynomial ring is 
$N(N+1) + N+1 + RN$, where $N(N+1)$ counts the $x, x', \ldots$ variables, 
$N+1$ counts the $y, y', \ldots$ variables, and $RN$ counts the
$u, u', \ldots$ variables.  Thus $P$ has dimension
$N + RN$.  Since the total number of variables in the set 
$\{y, u, y', u', \ldots, y^{(N-1)}, u^{(N-1)}, y^{(N)}\}$ is $N+1 + RN$,
these variables must be dependent, i.e.~there must exist a relation.
\end{proof}

For multiple outputs, one can again take derivatives up to order $N$ and show that there must exist an input-output equation for each output:

\begin{prop}\label{prop:grobneriomultiple}
Consider a state space model of the form (\ref{eq:statespacebasic})
where $f$ and $g$ are polynomial functions and where there
are $N$ state-space variables, $M$ output variables, and $R$ input
variables.
Let $P$ be the ideal 
\begin{multline*}
\langle x' - f(x,p,u), \, \ldots, \, 
x^{(N)} -  \frac{d^{N-1}}{dt^{N-1}}f(x,p,u), \,   
y - g(x,p), \, \ldots, \,  y^{(N)} - \frac{d^N}{dt^N}g(x,p) \rangle  \\
\subseteq \qq(p)[x,y,u, x', y', u', \ldots, x^{(N-1)}, y^{(N-1)}, u^{(N-1)}, x^{(N)}, y^{(N)}].
\end{multline*}
Then $P \cap \qq(p)[y_i, u, y_i', u', \ldots, y_i^{(N-1)}, u^{(N-1)}, y_i^{(N)}]$
is not the zero ideal and hence contains an input-output equation for each $y_i$.
\end{prop}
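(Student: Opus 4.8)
The plan is to rerun the proof of Proposition \ref{prop:grobnerio} almost verbatim, changing only the variable and equation counts to account for the $M$ outputs. First I would check that $P$ is prime. Using the same sort of carefully chosen lexicographic term order --- one in which every derivative variable dominates all lower-order variables and every $y$-variable (and its derivatives) dominates every $x$-variable (and its derivatives) --- the generator $x^{(k)}-\frac{d^{k-1}}{dt^{k-1}}f$ has leading term $x^{(k)}$, since $\frac{d^{k-1}}{dt^{k-1}}f$ only involves $x,\ldots,x^{(k-1)}$ and $u,\ldots,u^{(k-1)}$; and the generator $y_i^{(k)}-\frac{d^k}{dt^k}g_i$ has leading term $y_i^{(k)}$, since $g_i=g_i(x,p)$ so $\frac{d^k}{dt^k}g_i$ only involves the $x$-variables. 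The leading terms are pairwise distinct single variables, hence pairwise coprime, so these generators form a Gr\"obner basis and the initial ideal of $P$ is
\[
\langle x', \ldots, x^{(N)}\rangle + \langle y_i^{(k)} : 1 \le i \le M, \, 0 \le k \le N\rangle,
\]
which is generated by a subset of the variables and hence prime; therefore $P$ is prime and we may work with its associated algebraic matroid.

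Next comes the dimension count. The initial ideal above is generated by $N^2 + M(N+1)$ of the variables, so $P$ is a complete intersection of that codimension. The ambient polynomial ring has $N(N+1) + M(N+1) + RN$ variables, the three summands counting the $x,\ldots,x^{(N)}$, the $y_i,\ldots,y_i^{(N)}$, and the $u,\ldots,u^{(N-1)}$ respectively. Hence
\[
\dim P = N(N+1) + M(N+1) + RN - \bigl(N^2 + M(N+1)\bigr) = N + RN,
\]
the same value as in the single-output case: each extra output contributes equally many new variables and new generators, so the dimension is unchanged.

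Finally, fix an output index $i$. The set $\{y_i, u, y_i', u', \ldots, y_i^{(N-1)}, u^{(N-1)}, y_i^{(N)}\}$ has $(N+1) + RN$ elements, strictly more than $N + RN = \dim P$, which is the rank of the algebraic matroid of $P$. So this set is dependent in the matroid, i.e. $P \cap \qq(p)[y_i, u, y_i', u', \ldots, y_i^{(N-1)}, u^{(N-1)}, y_i^{(N)}]$ is nonzero and hence contains an input-output equation for $y_i$. Since $i$ was arbitrary, we obtain one for every output.

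The only delicate point --- and it is the very same one already present in Proposition \ref{prop:grobnerio} --- is verifying that a single term order realizes all of the claimed leading terms simultaneously; a lexicographic order ranking higher-order derivatives far above lower-order ones and ranking the whole $y$-block above the whole $x$-block above the whole $u$-block does the job. Alternatively, one can bypass the direct count by applying Proposition \ref{prop:grobnerio} to the single output $y_i$ and observing that the relation it produces, being a consequence of a subset of the generators of $P$, already lies in $P$; but the direct dimension count is cleaner and makes transparent why the single-output bound persists.
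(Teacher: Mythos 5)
Your proposal is correct and follows essentially the same route as the paper's proof: the paper likewise reduces to the argument of Proposition \ref{prop:grobnerio}, counts $N^2+M(N+1)$ equations against $N(N+1)+M(N+1)+RN$ variables to get $\dim P = N+RN$, and concludes that the $(N+1)+RN$ variables $\{y_i,u,\ldots,y_i^{(N)},u^{(N-1)}\}$ must be algebraically dependent for each $i$. The extra detail you supply on the term order and primality is consistent with what the paper implicitly carries over from the single-output case.
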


\begin{proof}
We follow the proof of Proposition \ref{prop:grobnerio}.  The number
of equations involved is $N^2 + M(N+1)$.
The total number of variables in our polynomial ring is 
$N(N+1) + M(N+1) + RN$. Here $N(N+1)$ counts the $x, x', \ldots$ variables, 
$M(N+1)$ counts the $y, y', \ldots$ variables, and $RN$ counts the
$u, u', \ldots$ variables.  Thus $P$ has dimension
$N + RN$.  For each $y_i$, the total number of variables in the set 
$\{y_i, u, y_i', u', \ldots, y_i^{(N-1)}, u^{(N-1)}, y_i^{(N)}\}$ is $N+1 + RN$.  Thus
these variables must be dependent, i.e.~there must exist a relation for each $y_i$.
\end{proof}

Note that one could also work with smaller ideals than $P$ with only
up to $k \leq N$ derivatives, as in \cite{Meshkat2}.  In some instances this might produce an
input output equation, but the dimension guarantee that ensures the
existence of an input/output equation only occurs when $k = N$.

\begin{ex}\label{ex:SIR2}
Consider the SIR model from Example \ref{ex:SIR1}.
The ideal $P$ in this example is:
\begin{multline*}
\langle S' - \mu S - b S I + \mu N, \, 
S'' - \mu S' - \beta S I' - \beta S' I, \, 
 S'''  - \mu S'' - \beta S I'' - 2 \beta S' I' - \beta S'' I, \, \\  
 I' - (\mu+\gamma) I + \beta S I, \, 
I'' - (\mu+\gamma) I' + \beta S' I - \beta S I', \, 
 I''' - (\mu+\gamma) I'' + \beta S'' I - 2 \beta S' I' - \beta S I'', \, \\
 R' - \mu R + \gamma I, \, 
R'' - \mu R' + \gamma I', \, 
R''' - \mu R'' + \gamma I'', \, \\
y - k I, \,
y' - k I', \,
y'' - k I'', \,
y''' - k I''' \rangle .
\end{multline*}

This model has no input, so in this case we get a single output equation
in the output variable $y$ and the parameters $\mu, \beta, \gamma, N,$ and $k$.
The output equation is:
\[ 
(-\beta k N \mu + k N \mu^2 + k N \mu \gamma)y^2 + (\beta \mu + \beta \gamma)y^3 + k N \mu y y' + \beta y^2 y' - k N y'^2 + k N y y'' = 0. 
\]
This differential equation has $6$ differential monomials $y^2, y^3, y y', y^2 y', 
y'^2, y y'', $ so the coefficient vector $c$ gives a function
from $\rr^5$ to $\rr^6$, given by
\[
c:  \rr^5 \rightarrow \rr^6, \quad  ( \mu, \beta, \gamma, N,k) \mapsto
(-\beta k N \mu + k N \mu^2 + k N \mu \gamma, \beta \mu + \beta \gamma, 
k N \mu, \beta, -kN, kN). 
\]
\end{ex}

The dynamics of the input and output will only depend on the
input-output equation up to a nonzero constant multiple.  Hence, 
the coefficient map is only truly well-defined up to
scalar multiplication.  There are two natural ways to deal with this issue.
The most appealing for an algebraist is to treat
the coefficient map as a map into projective space:
$c:  \rr^n  \rightarrow  \rr\pp^{m-1}$.  The second
approach is to force the equation to have a fixed form that
will avoid this issue, by forcing the equation to be monic by
dividing through by one of the coefficients.  We will take
the second approach in this paper.  In the output equation in
Example \ref{ex:SIR2}, one possible normalization yields the coefficient map
\[
c:  \rr^5 \rightarrow \rr^6, \quad  ( \mu, \beta, \gamma, N,k) \mapsto
(-\beta  \mu +  \mu^2 +  \mu \gamma, \frac{\beta \mu + \beta \gamma}{kN}, 
\mu, \frac{\beta}{kN}, -1, 1). 
\]

In the standard differential algebra approach to identifiability,
we assume that the coefficients $c_i(p)$ of the input-output equations
can be recovered uniquely from the input-output data, and thus are 
assumed to be known quantities.  This is a reasonable assumption
when the input $u$ is a general enough function and the parameters
are generic:  in this case the dynamics will yield a unique differential
equation.  The identifiability question is 
then: can the parameters of the model be recovered 
from the coefficients of the input-output equations?

\begin{defn}\label{defn:identify}
Let $c=(c_1(p),...,c_m(p))$ denote the vector of coefficients of the input-output 
equations, which are rational functions in the parameter vector $p$,
which we assume to be normalized so that the input-output equations are monic.
We consider $c$ as a function from some natural open
biologically relevant parameter space $\Theta \subseteq \rr^n$.

\begin{itemize}
	\item  The model is \textit{globally identifiable} if $c: \Theta \rightarrow \rr^m$ 
	is a one-to-one function.
	\item  The model is   \textit{generically globally identifiable} if
	there is a dense open subset $\Theta' \subseteq \Theta$ such that
	$c: \Theta' \rightarrow \rr^m$ is one-to-one.
	\item The model is \textit{locally identifiable} if around any point 
	$p \in \Theta$ there is an open neighborhood $U_p \subseteq \Theta$ such that  
	$c : U_p  \rightarrow \rr^m$  is a one-to-one function.
	\item The model is \textit{generically locally identifiable} 
	if there is a dense open subset $\Theta' \subseteq \Theta$ such that
	for all $p \in \Theta'$ there is an open neighborhood $U_p \subseteq \Theta'$
	such that $c : U_p  \rightarrow \rr^m$  is a one-to-one function.
\item The model is \textit{unidentifiable} if there is a $p \in \Theta$ such that
 $c^{-1}(c(p))$ is infinite.
\item The model is \textit{generically unidentifiable} if there is a dense subset
$\Theta' \subseteq \Theta$ such that for all $p \in \Theta'$,  $c^{-1}(c(p))$ is infinite.
\end{itemize}
\end{defn}

As can be seen, there are many different variations on the notions of identifiability.
Because of problems that might arise on sets of measure zero that can
ruin the strongest form of global identifiability, one usually
needs to add the generic conditions to get meaningful results.
In this paper, we will consider state space models (\ref{eq:statespacebasic})
where $f$ and $g$ are polynomial (or rational) functions.  This
ensures, via the differential elimination procedure,
that the coefficient function $c(p)$ is a rational function of the
parameters.  For linear compartment models this can always be taken to be
polynomial functions.  

In this paper, we will also focus 
almost exclusively on generic local identifiability and generic
nonidentifiability and will use the 
following result to determine which of these conditions the model satisfies.  

\begin{prop} \label{prop:jacobian}
The model is generically locally
identifiable if and only if the rank of the Jacobian of $c$
is equal to $n$ when evaluated at a generic point. 
Conversely, if the rank of the Jacobian of $c$ is less
than $n$ for all choices of the parameters then the model
is generically unidentifiable.
\end{prop}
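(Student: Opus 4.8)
The plan is to derive everything from elementary differential calculus applied to $c$, which, being a ratio of polynomials, is real-analytic on the dense open subset of $\Theta$ where its denominators do not vanish; since all assertions concern generic behavior we may assume $c$ is real-analytic on all of $\Theta$, and (arguing on one connected component at a time) that $\Theta$ is connected. The key preliminary observation is that $p \mapsto \mathrm{rank}\, Dc_p$ is lower semicontinuous: for each integer $k$ the set $\{p \in \Theta : \mathrm{rank}\, Dc_p \ge k\}$ is open, being the locus where some $k \times k$ minor of the Jacobian is nonzero. Let $r$ be the maximal value attained by the rank. Then $W := \{p \in \Theta : \mathrm{rank}\, Dc_p = r\}$ is open, and dense because its complement lies in the zero set of a nonzero analytic function (any $r \times r$ minor that is not identically zero), while $\mathrm{rank}\, Dc_p \le r$ for every $p \in \Theta$. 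In this language, ``the Jacobian of $c$ has rank $n$ at a generic point'' means exactly $r = n$, and ``the Jacobian of $c$ has rank less than $n$ for all parameters'' means exactly $r < n$.

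First I would prove the forward implication of the equivalence. Suppose $r = n$. Then on $W$ the Jacobian has constant (full) rank $n$, so the constant rank theorem gives, for each $p \in W$, an open neighborhood $U_p \subseteq W$ on which $c$ is one-to-one. Since $W$ is open and dense, taking $\Theta' = W$ in Definition \ref{defn:identify} shows the model is generically locally identifiable.

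Next I would prove the converse statement, which also yields the remaining (``only if'') implication of the equivalence by contraposition. Suppose $r < n$. On $W$ the rank is constantly $r$, so the constant rank theorem applies: near each $p \in W$ there are analytic coordinate changes in the source and in the target after which $c$ becomes the linear projection $(z_1, \ldots, z_n) \mapsto (z_1, \ldots, z_r, 0, \ldots, 0)$. Hence, in a neighborhood of $p$, the fiber $c^{-1}(c(p))$ is a submanifold of dimension $n - r \ge 1$; in particular $c^{-1}(c(p))$ is infinite and $c$ is injective on no neighborhood of $p$. Since $W$ is dense, taking $\Theta' = W$ in Definition \ref{defn:identify} shows the model is generically unidentifiable; and since every dense open subset of $\Theta$ meets $W$, the model is not generically locally identifiable.

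I do not expect a genuine obstacle: the care needed is bookkeeping — confirming analyticity, handling connectedness of $\Theta$, and interpreting ``generic'' consistently as ``on a dense open subset.'' One could instead argue algebro-geometrically, identifying $r$ with the dimension of the Zariski closure of $\im c$ and invoking the fiber-dimension theorem for dominant morphisms; that route would, however, need to address whether a positive-dimensional complex fiber carries infinitely many real points, which the real-analytic constant-rank argument sidesteps.
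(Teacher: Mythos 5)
Your proof is correct, and it takes a genuinely different route from the paper's. The paper's argument is a two-line appeal to algebraic geometry: generic local identifiability is equated with the image of $c$ having dimension $n$ (implicitly, the fiber-dimension theorem for dominant rational maps), and that dimension is then identified with the generic rank of the Jacobian. You instead work entirely in the real-analytic category: lower semicontinuity of the rank gives a dense open locus $W$ of maximal rank $r$, and the constant rank theorem yields local injectivity when $r=n$ and positive-dimensional (hence infinite) local fibers when $r<n$. The trade-off is as you yourself observe: the paper's route is shorter but, read literally over $\rr$, leaves unaddressed whether a positive-dimensional fiber of the complexified map actually contains infinitely many real points near a given real parameter; your constant-rank argument settles that directly by exhibiting the fiber as a real submanifold of dimension $n-r\ge 1$ through each generic $p$, at the cost of some bookkeeping (analyticity off the denominator locus, connectedness, density of $W$). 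Both arguments establish the statement; yours is the more self-contained and the more careful about the real versus complex distinction.
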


\begin{proof}
Since the coefficients in $c$ are all polynomial or rational functions of the
parameters, the model is generically locally
identifiable if and only if the image of $c$ has dimension equal to
the number of parameters, i.e.~$n$.  The dimension of the image of
a map is equal to the evaluation of the Jacobian at a generic point.
\end{proof}

\begin{ex}
{\bf SIR Model}
From Example \ref{ex:SIR2}, we have the following coefficient map:
\[
c:  \rr^5 \rightarrow \rr^6, \quad  ( \mu, \beta, \gamma, N,k) \mapsto
(-\beta  \mu +  \mu^2 +  \mu \gamma, \frac{\beta \mu + \beta \gamma}{kN}, 
\mu, \frac{\beta}{kN}, -1, 1). 
\]
We obtain the Jacobian with respect to the parameter ordering $(k,N,\mu,\gamma,\beta)$:
$$
\begin{pmatrix}
0  & 0 & -\beta + 2 \mu + \gamma  & \mu & \mu \\
\frac{-(\mu + \gamma)\beta} {k^2 N} & \frac{-(\mu + \gamma)\beta} {k N^2} & \frac{\beta} {k N} & \frac{\beta} {k N} & \frac{(\mu + \gamma)} {k N} \\
0 & 0 & 1 & 0 & 0 \\
\frac{-\beta} {k^2 N} & \frac{-\beta} {k N^2} & 0 & 0 & \frac{1} {k N} \\
0 & 0 & 0 & 0 & 0 \\
0 & 0 & 0 & 0 & 0
\end{pmatrix}.
$$
Since the rank of the Jacobian at a generic point is $4$, not $n=5$, the model is generically unidentifiable.
\end{ex} 

\subsection{Input-output equations for linear models}

There have been several methods proposed to find the input-output 
equations of nonlinear ODE models 
\cite{Audoly2001,Bearup,Evans,EvansChappell,Ljung,Meshkat2,Pohjanpalo}, 
but for linear models the problem is much simpler.  We use Cramer's rule in the following theorem, whose proof can be found in \cite{MeshkatSullivant}:

\begin{thm}\label{thm:ioeqn}
Let $\partial$ be the differential operator $d/dt$ and let 
$A_{ji}$ be the submatrix of $\partial{I}-A$ obtained by deleting the $jth$ row and the $ith$ column
of $\partial{I}-A$.  Then the input-output equations are of the form:
$$\frac{\det(\partial{I}-A)}{g_i}y_i=\sum_{j \in In} (-1)^{i + j} \frac{\det(A_{ji})}{g_i} u_j$$
where $g_i$ is the greatest common divisor of  $\det(\partial{I}-A)$, $\det(A_{ji})$ such that $j \in In$ for a given $i \in Out$.
\end{thm}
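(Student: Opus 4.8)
The plan is to observe that, although $\partial = d/dt$ is a differential operator, it \emph{commutes} with the entries of $A$: the parameters $p$ are constants with respect to the derivation, so every entry of $A$ is. Hence the matrix $\partial I - A$ has all of its entries in the \emph{commutative} polynomial ring $\qq(p)[\partial]$, and we may apply ordinary Cramer's rule over a commutative ring rather than any noncommutative (Ore determinant) substitute. First I would rewrite the state equations $x'(t) = Ax(t) + u(t)$ as the single matrix equation $(\partial I - A)\,x = u$ in $\qq(p)[\partial]^N$, where $u = (u_1,\dots,u_N)^T$ with $u_j \equiv 0$ for $j \notin In$. Since $\det(\partial I - A)$ is monic of degree $N$ in $\partial$, it is a nonzero element of $\qq(p)[\partial]$, so Cramer's rule gives $\det(\partial I - A)\,x_i = \det(M_i)$, where $M_i$ is obtained from $\partial I - A$ by replacing its $i$th column with the vector $u$.

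Next I would expand $\det(M_i)$ by cofactors along its $i$th column. Because only the $i$th column of $M_i$ differs from that of $\partial I - A$, the minor of $M_i$ obtained by deleting row $j$ and column $i$ is exactly $A_{ji}$, the submatrix of $\partial I - A$ deleting row $j$ and column $i$ as defined in the statement; and the $(j,i)$ entry of $M_i$ is $u_j$. Therefore
\[
\det(M_i) \;=\; \sum_{j=1}^{N} (-1)^{i+j}\det(A_{ji})\,u_j \;=\; \sum_{j \in In} (-1)^{i+j}\det(A_{ji})\,u_j,
\]
the last equality because $u_j \equiv 0$ outside $In$. Substituting $y_i = x_i$ for $i \in Out$ yields $\det(\partial I - A)\,y_i = \sum_{j \in In}(-1)^{i+j}\det(A_{ji})\,u_j$. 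Finally I would pass to reduced form: in the principal ideal domain $\qq(p)[\partial]$, let $g_i$ be a greatest common divisor of $\det(\partial I - A)$ together with the $\det(A_{ji})$ for $j \in In$; dividing the displayed identity through by $g_i$ produces the asserted equation. Since no common differential-operator factor remains, this is the minimal-order relation among $y_i$, the $u_j$, and their derivatives, i.e. the input-output equation for $y_i$; one checks it genuinely lies in the differential-elimination ideal, which is immediate as it was obtained by applying differential operators to the defining equations.

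The only step that demands real care is the opening observation that $\partial I - A$ may be regarded as a matrix over the commutative ring $\qq(p)[\partial]$ — this is what legitimizes the Cramer's-rule manipulation, and without it the determinant identities would need the noncommutative apparatus of quasideterminants. After that, everything reduces to the standard cofactor expansion and to the elementary fact that a one-variable polynomial ring over a field is a PID, so the gcd-reduction in the last step is unproblematic. I would therefore expect no substantive difficulty beyond presenting the commutativity point cleanly.
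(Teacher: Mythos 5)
Your proposal is correct and follows essentially the same route as the paper, which explicitly invokes Cramer's rule over the commutative operator ring $\qq(p)[\partial]$ (the paper defers the details to the cited reference \cite{MeshkatSullivant}, where the argument is exactly this: rewrite the system as $(\partial I - A)x = u$, apply the adjugate identity, and expand the $i$th column cofactors to pick out the $\det(A_{ji})u_j$ terms). The only point worth flagging is the final gcd step: cancelling a common operator factor $g_i$ from $Ly_i = Mu$ deserves a word of justification beyond ``dividing through,'' since $g_i w = 0$ has nonzero solutions; one should say that the reduced relation is the generator of the (principal) elimination ideal, which is how the input-output equation is defined.
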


\begin{ex} \label{ex:2comp3}
{\bf Linear Compartment Model}.
 For the linear 2-compartment model from Example \ref{ex:2comp2}, we obtain the following input-output equation:
$$ y_1''-(a_{11}+a_{22})y_1'+(a_{11}a_{22}-a_{12}a_{21})y_1=u_1'-a_{22}u_1. $$
Thus we have the following coefficient map:
\[
c:  \rr^4 \rightarrow \rr^5, \quad  ( a_{11}, a_{22}, a_{12}, a_{21}) \mapsto
(1,-a_{11}-a_{22}, a_{11}a_{22}-a_{12}a_{21}, 1, -a_{22}). 
\]
We obtain the Jacobian with respect to the parameter ordering $(a_{11}, a_{22}, a_{12}, a_{21})$:
$$
\begin{pmatrix}
0 & 0 & 0 & 0 \\
-1 & -1 & 0 & 0 \\
a_{22} & a_{11} & -a_{21} & -a_{12} \\
0 & 0 & 0 & 0 \\
0 & -1 & 0 & 0
\end{pmatrix}.
$$
Since the rank of the Jacobian at a generic point is $3$, not $n=4$, the model is generically unidentifiable.
\end{ex}


\section{Identifiable functions}

One issue that arises in identifiability analysis of state space models
is figuring out what to do with a model that is generically unidentifiable.
In some circumstances, the natural approach is to develop a new model that
has fewer parameters that is identifiable.  In other circumstances, the given
model is forced upon us by the biology, and we cannot change it.  
When working with such a generically unidentifiable 
model, we would still like to determine what functions of the parameters
can be determined from given input and output data.

\begin{defn}
Let $c:  \Theta \rightarrow \rr^m$ be the coefficient map, and let
$f: \Theta \rightarrow \rr$ be another function.  We say the function $f$ is
\begin{itemize}
\item  \emph{identifiable} from $c$ if for all $p, p' \in \Theta$,
$c(p) = c(p')$ implies $f(p) = f(p')$;
\item  \emph{generically identifiable} from $c$ if there is an open dense subset
$U \subseteq \Theta$ such that $f$ is identifiable from $c$ on $U$;
\item  \emph{rationally identifiable} from $c$ if there is a rational function $\phi$
such that $\phi \circ c(p) = f(p)$ on a dense open subset $U \subseteq \Theta$;
\item  \emph{locally identifiable} from $c$ if there is an open dense subset 
$U \subseteq \Theta$ such that for all $p\in U$, there is an open neighborhood
$U_p$ such that $f$ is identifiable from $c$ on $U_p$;
\item  \emph{non-identifiable} from $c$ if there exists $p, p' \in \Theta$
such that $c(p) = c(p')$ but $f(p) \neq f(p')$; and
\item  \emph{generically non-identifiable} from $c$ if there is a subset
$U \subseteq \Theta$ of nonzero measure such that for all $p \in U$ the set 
$\{ f(p') : p' \in U \mbox{ and } c(p) = c(p') \}$ is
infinite.
\end{itemize}
\end{defn}

\begin{ex} \label{ex:2comp4} 
From the linear 2-compartment model in Example \ref{ex:2comp3}, let $p=(a_{11},a_{22},a_{12},a_{21})$ and let $c_1(p)=-a_{11}-a_{22}, c_2(p)=a_{11}a_{22}-a_{12}a_{21}, c_3(p)=-a_{22}$.  Then the functions $a_{11}, a_{22}, a_{12}a_{21}$ are rationally identifiable since
\[ a_{11} = -c_1 + c_3, \quad \quad  a_{22} = -c_3 \quad \quad   a_{12}a_{21}= -c_2-(-c_1+c_3)c_3.
\]
\end{ex}

Because we work with polynomial and rational maps $c$ and $f$ in this work,
the majority of these conditions can be phrased in algebraic language,
and checked using computer algebra.

\begin{prop}\label{prop:field} \begin{enumerate} 
\setlength{\itemindent}{-20pt}
\item The function $f(p)$ is rationally identifiable 
from $c(p) = (c_1(p),...,c_{m}(p))$ if and only if 
$ \rr(f(p),c_1(p),...,c_{m}(p)) = \rr(c_1(p),...,c_{m}(p)) $
as field extensions. \item The function $f(p)$ is locally identifiable
from  $c(p)$ if and only if $f(p)$ is algebraic 
over $\rr(c_1(p),...,c_{m}(p))$.
\item The function $f(p)$ is generically non-identifiable 
from $c(p)$ if and only if
$f(p)$ is transcendental over $\rr(c_1(p),...,c_{m}(p))$.
\end{enumerate}
\end{prop}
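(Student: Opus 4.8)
The plan is to prove all three parts by interpreting $c$ as a dominant rational map onto its image $Y = \overline{c(\Theta)} \subseteq \rr^m$, and translating statements about the fibers of $c$ and about the function $f$ into statements about the function field extension $\rr(\Theta)/\rr(Y)$, where $\rr(Y)$ is identified with the subfield $\rr(c_1(p),\ldots,c_m(p)) \subseteq \rr(p) = \rr(p_1,\ldots,p_n)$. Throughout, ``generic'' statements about the parameter space will correspond to statements holding on a dense open subset, which is exactly what lets us pass freely between the geometry of fibers and the algebra of field extensions.

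For part (1), the forward direction is essentially the definition: if $\phi$ is a rational function with $f = \phi \circ c$ on a dense open set, then $f \in \rr(c_1,\ldots,c_m)$, so adjoining $f$ changes nothing. Conversely, if $f \in \rr(c_1,\ldots,c_m)$, then $f$ is a rational expression in the $c_i$, which is precisely a rational function $\phi$ of $c$ with $f = \phi \circ c$ wherever the denominators do not vanish, a dense open set; hence $f$ is rationally identifiable. For part (2), I would use the standard fact that for a dominant rational map $c \colon \Theta \dashrightarrow Y$, the generic fiber has dimension $\dim \Theta - \dim Y = \operatorname{trdeg}_{\rr(Y)} \rr(\Theta)$. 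The function $f$ restricted to a generic fiber $c^{-1}(c(p))$ is constant on a neighborhood of $p$ exactly when $f$ does not vary along that fiber to first order; more precisely, $f$ is locally identifiable iff $f$ is constant on each irreducible component of the generic fiber, which happens iff $f$ lies in the algebraic closure of $\rr(Y)$ inside $\rr(\Theta)$, i.e.\ iff $f$ is algebraic over $\rr(c_1,\ldots,c_m)$. The cleanest way to see the nontrivial direction is: if $f$ is algebraic over $\rr(Y)$, it satisfies a polynomial relation with coefficients in $\rr(Y)$, so on a generic fiber $f$ takes only finitely many values, and by continuity (and irreducibility of the generic fiber, or of its components) it is locally constant; conversely, if $f$ is transcendental over $\rr(Y)$, then $\rr(Y)(f)$ has transcendence degree one more than $\rr(Y)$ inside $\rr(\Theta)$, so the map $(c,f) \colon \Theta \dashrightarrow Y \times \rr$ has image of dimension $\dim Y + 1$, forcing its generic fibers — which are contained in the fibers of $c$ — to have dimension strictly less than those of $c$, so $f$ genuinely varies on a positive-dimensional generic fiber of $c$. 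Part (3) is then the logical negation of part (2) combined with the dichotomy ``algebraic or transcendental'': I would argue that ``generically non-identifiable'' means exactly that on a positive-measure set of $p$ the fiber value set $\{f(p') : c(p') = c(p)\}$ is infinite, which by the fiber-dimension argument is equivalent to $f$ being non-constant on positive-dimensional generic fibers of $c$, equivalently $\dim \overline{(c,f)(\Theta)} > \dim Y$, equivalently $f$ transcendental over $\rr(Y)$.

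The main obstacle I anticipate is the careful handling of genericity and irreducibility in part (2): one must make sure that the generic fiber of $c$ (or at least each of its irreducible components) is irreducible/connected enough that ``$f$ takes finitely many values on it'' upgrades to ``$f$ is locally constant on it,'' and conversely that transcendence of $f$ really forces an infinite, rather than merely large finite, value set on a generic fiber. This is where one needs the theorem on generic fiber dimension for dominant morphisms together with the fact that a dominant map between irreducible varieties has irreducible generic fiber (a consequence of generic flatness / the Noether normalization argument over the function field), applied to $(c,f)$ versus $c$. The remaining parts are then formal: part (1) is a restatement of definitions, and part (3) follows by negating part (2) and using that every element of $\rr(\Theta)$ is either algebraic or transcendental over $\rr(Y)$. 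I would organize the write-up as: first set up the field $\rr(Y) = \rr(c_1,\ldots,c_m)$ and the dictionary between fibers and transcendence degree; then dispatch (1); then prove the two directions of (2) via the fiber-dimension argument above; then obtain (3) as the contrapositive of (2).
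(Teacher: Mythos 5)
The paper states this proposition without proof, treating the equivalences as standard facts from the dictionary between field extensions and dominant maps, so there is no ``paper proof'' to match yours against; your write-up supplies an argument where the authors supply none, and the route you take --- viewing $c$ as a dominant map onto $Y = \overline{c(\Theta)}$, identifying $\rr(c_1(p),\ldots,c_m(p))$ with $\rr(Y)$, and comparing generic fiber dimensions of $c$ and of $(c,f)$ --- is the standard and correct one. Part (1) is indeed just the observation that rational identifiability means $f \in \rr(c_1(p),\ldots,c_m(p))$. For part (2), the obstacle you flag (irreducibility of generic fibers) can be sidestepped: if $f$ is algebraic, write $\sum_{i=0}^{d} g_i(c(p))\, f(p)^i = 0$ with $g_d \circ c$ not identically zero; for generic $p$ the $d$ roots of this univariate polynomial are finitely many \emph{fixed} numbers on the whole fiber $c^{-1}(c(p))$ (since $c$ is constant there), and continuity of $f$ on a small Euclidean ball $U_p$ forces $f(p')$ to equal the root nearest $f(p)$, i.e.\ $f(p') = f(p)$ for all $p' \in U_p$ with $c(p')=c(p)$. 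This gives local identifiability with no appeal to connectedness or irreducibility of fibers. Your converse (transcendence forces the image of $(c,f)$ to have dimension $\dim Y + 1$, hence $f$ non-constant on a positive-dimensional generic fiber, hence infinitely many values) is correct, though since everything here is over $\rr$ you should note that the relevant fibers through generic points of the open set $\Theta \subseteq \rr^n$ have real points of the full expected dimension, so ``infinitely many values'' genuinely holds on the real fiber and not just its complexification; this also delivers part (3) directly rather than as a bare negation of part (2), which is needed because generic non-identifiability is a positive assertion (an infinite value set on a positive-measure set) and not merely the failure of local identifiability.
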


To explain how to use Proposition \ref{prop:field} to check the
various identifiability conditions we need to introduce some terminology. 
Associated to a set $S \subseteq \rr^m$ we have the \emph{vanishing ideal}
$\mathcal{I}(S)  \subseteq \rr[z_1, \ldots, z_m]$ defined by
\[
\mathcal{I}(S) = \langle g \in \rr[z_1, \ldots, z_m] :  g(s)  = 0 \mbox{ for all } s \in S \rangle.
\]
When $S = {\rm im}(c)$ for $c$ a rational map, the vanishing ideal can be computed
using Gr\"obner bases and elimination \cite{Cox}. Associated to the pair of
coefficient map $c$ and function $f$ that we want to test identifiability
of, we have the augmented map $\tilde{c}: \rr^n  \rightarrow \rr^{m+1}$, 
$p \mapsto (f(p), c(p))$, and
the augmented vanishing ideal 
$\mathcal{I}({\rm im}(\tilde{c})) \subseteq \rr[z_0, z_1, \ldots, z_m].$

\begin{prop}\cite[Proposition 3]{Garcia2010}\label{prop:garcia}
Suppose that $g(z_0, z) \in \mathcal{I}({\rm im}(\tilde{c})) $
is a polynomial such that $z_0$ appears in $g$ and that we may write
$g(z_0,z) = \sum_{i = 0}^dg_i(z) z_0^i$ so that $g_d(z)$ is not in
$\mathcal{I}({\rm im}(c))$.
\begin{enumerate}
\item   If $g$ is linear, $g = g_1(z) z_0 - g_0(z)$ then $f$ is rationally
identifiable from $c$ by the formula $f = \frac{g_0(c)}{g_1(c)}$.  If
in addition $g_1(z) \neq 0$ for all $z \in {\rm im}(c)$ then
$f$ is globally identifiable.
\item  If $g$ has higher degree $d > 1$ in $z_0$, then $f$ is locally identifiable,
and there are generically at most $d$ possible values for $f(p')$ among all
$p'$ with $c(p) = c(p')$.
\item  If  no such polynomial $g$ exists then $f$ is generically non-identifiable from 
$c$.
\end{enumerate}
\end{prop}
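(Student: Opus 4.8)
The plan is to derive each of the three items of Proposition~\ref{prop:garcia} from Proposition~\ref{prop:field}, by translating the field-theoretic criteria there into statements about a generator $g(z_0,z)$ of the appropriate eliminant. The key observation is that $\mathcal{I}(\mathrm{im}(\tilde c))$, intersected with $\rr(z_1,\ldots,z_m)[z_0]$ after localizing away from $\mathcal{I}(\mathrm{im}(c))$, is a principal ideal in a PID, so a single ``primitive'' polynomial $g(z_0,z)=\sum_{i=0}^d g_i(z)z_0^i$ with $g_d\notin\mathcal{I}(\mathrm{im}(c))$ captures the entire algebraic relationship of $f(p)$ over $\rr(c_1(p),\ldots,c_m(p))$. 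Concretely, mapping $z_i\mapsto c_i(p)$ and $z_0\mapsto f(p)$ realizes $\rr(c_1(p),\ldots,c_m(p))$ as the function field of $\overline{\mathrm{im}(c)}$ and sends $g(z_0,z)$ to a polynomial relation $\sum_i g_i(c(p))f(p)^i=0$; the hypothesis $g_d\notin\mathcal{I}(\mathrm{im}(c))$ guarantees $g_d(c(p))\neq 0$ generically, so this is a genuine degree-$d$ algebraic equation for $f(p)$ over the field $\rr(c_1(p),\ldots,c_m(p))$.

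For item (1): if such a $g$ is linear, $g=g_1(z)z_0-g_0(z)$ with $g_1\notin\mathcal{I}(\mathrm{im}(c))$, then on the dense open set where $g_1(c(p))\neq 0$ we get $f(p)=g_0(c(p))/g_1(c(p))$, which exhibits $f$ as a rational function of $c$; by the definition this is exactly rational identifiability, and equivalently $\rr(f(p),c(p))=\rr(c(p))$, matching Proposition~\ref{prop:field}(1). If moreover $g_1(z)\neq 0$ for \emph{all} $z\in\mathrm{im}(c)$, then the formula $f=g_0(c)/g_1(c)$ is valid on all of $\Theta$, so $c(p)=c(p')$ forces $f(p)=f(p')$ everywhere, i.e.\ global identifiability. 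For item (2): if $d>1$, then $g$ shows $f(p)$ is algebraic of degree $\le d$ over $\rr(c_1(p),\ldots,c_m(p))$, so by Proposition~\ref{prop:field}(2) $f$ is locally identifiable; and for generic $p$, any $p'$ with $c(p)=c(p')$ must have $f(p')$ a root of the single-variable polynomial $\sum_i g_i(c(p))z_0^i$, which has at most $d$ roots, giving the counting bound. For item (3): the nonexistence of any such $g$ means $\mathcal{I}(\mathrm{im}(\tilde c))$ contains no polynomial involving $z_0$ whose top $z_0$-coefficient survives modulo $\mathcal{I}(\mathrm{im}(c))$; clearing denominators, this is equivalent to saying $f(p)$ satisfies no algebraic equation over $\rr(c_1(p),\ldots,c_m(p))$, i.e.\ $f(p)$ is transcendental over that field, so Proposition~\ref{prop:field}(3) gives generic non-identifiability.

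The step I expect to require the most care is the ``only if'' direction embedded in item (3) — namely, the claim that transcendence of $f(p)$ over $\rr(c_1(p),\ldots,c_m(p))$ really does preclude the existence of \emph{any} polynomial $g\in\mathcal{I}(\mathrm{im}(\tilde c))$ with the stated nonvanishing property on its leading coefficient. One has to be careful that a polynomial $g$ could lie in $\mathcal{I}(\mathrm{im}(\tilde c))$ while having \emph{every} coefficient $g_i$ in $\mathcal{I}(\mathrm{im}(c))$ (so it gives no information about $f$), and rule out that a clever choice evades this; the resolution is that any such $g$ factors, modulo $\mathcal{I}(\mathrm{im}(c))$, through a lower-content polynomial, so one may assume the content is coprime to $\mathcal{I}(\mathrm{im}(c))$ — essentially a Gauss's-lemma argument in the localization $\rr[z]_{\mathfrak p}[z_0]$ where $\mathfrak p=\mathcal{I}(\mathrm{im}(c))$. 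Once this primitivity reduction is in place, the equivalence with algebraicity/transcendence is immediate, and the three items follow as above. The remaining verifications — that the rational formula in (1) holds on a dense open set, and the root-counting in (2) — are routine consequences of $g_d\notin\mathcal{I}(\mathrm{im}(c))$ and the definition of the vanishing ideal.
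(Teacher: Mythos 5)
The paper offers no proof of Proposition~\ref{prop:garcia}: it is imported verbatim from \cite[Proposition 3]{Garcia2010}, so there is no in-paper argument to compare yours against. Taken on its own terms, your derivation is correct and is the natural one: you reduce each item to the field-theoretic criteria of Proposition~\ref{prop:field} by reading $g$ as the relation $\sum_i g_i(c(p))\,f(p)^i=0$, with the hypothesis $g_d\notin\mathcal{I}({\rm im}(c))$ guaranteeing that $g_d(c(p))$ is generically nonzero, so that $g$ really is a degree-$d$ algebraic equation for $f(p)$ over $\rr(c_1(p),\ldots,c_m(p))$. Items (1) and (2) then follow exactly as you say, and you correctly isolate the only delicate point, the contrapositive needed for item (3): algebraicity of $f(p)$ over $\rr(c_1(p),\ldots,c_m(p))$ must produce a $g\in\mathcal{I}({\rm im}(\tilde c))$ whose leading $z_0$-coefficient survives modulo $\mathcal{I}({\rm im}(c))$. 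Your Gauss's-lemma/content discussion is more machinery than is needed here; the fact actually doing the work is that $\mathcal{I}({\rm im}(c))$ is a prime ideal (the closure of the image of an irreducible set under a rational map is irreducible), so when you clear the denominators $Q_i(c)$ of the coefficients of a minimal polynomial for $f(p)$ --- each $Q_i\notin\mathcal{I}({\rm im}(c))$ by definition of the function field --- the resulting leading coefficient, being a product of elements not in a prime ideal, is again not in $\mathcal{I}({\rm im}(c))$. I would state that primality explicitly rather than gesture at a primitivity reduction. With that made precise, the argument is complete modulo Proposition~\ref{prop:field}, which the paper likewise asserts without proof, so your reduction to it is a legitimate stopping point.
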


For local identifiability of a function, it is also possible to check using 
a Jacobian calculation, a result that follows easily from Proposition \ref{prop:jacobian}.

\begin{prop} \label{prop:jacident}
Let $c:\rr^{n}\rightarrow{\rr^{m}}$ be the coefficient map.
A function $f: \rr^{n} \rightarrow \rr$ is locally identifiable from $c$ if
 $\nabla f$ is in the span of the rows of the Jacobian $J(c)$.
Equivalently, consider the augmented map $\tilde{c}:  \rr^{n} \rightarrow \rr^{m+1}$.
Then $f$ is locally identifiable from $c$ if and only if
the dimension of the image of $\tilde{c}$ equals the dimension of the image
of $c$.
\end{prop}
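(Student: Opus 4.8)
The plan is to reduce both halves of the statement to Proposition~\ref{prop:field}(2), which characterizes local identifiability of $f$ from $c$ as algebraicity of $f$ over the field $K := \rr(c_1(p),\dots,c_m(p))$, together with the standard dictionary already used in the proof of Proposition~\ref{prop:jacobian}: for a rational map, the dimension of the image equals the transcendence degree over $\rr$ of the field generated by the coordinate functions, and also equals the rank of the Jacobian evaluated at a generic point.

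For the ``equivalently'' clause I would argue as follows. Put $L := \rr(f(p),c_1(p),\dots,c_m(p))$, so that $K \subseteq L$ as subfields of $\rr(p_1,\dots,p_n)$. By the dictionary, $\dim\im(c) = \mathrm{trdeg}_\rr K$ and $\dim\im(\tilde{c}) = \mathrm{trdeg}_\rr L$, and since $\tilde{c}$ lifts $c$ we always have $\dim\im(\tilde{c}) \ge \dim\im(c)$. The function $f$ is algebraic over $K$ precisely when $L/K$ is an algebraic extension, i.e.\ precisely when $\mathrm{trdeg}_\rr L = \mathrm{trdeg}_\rr K$, i.e.\ precisely when $\dim\im(\tilde{c}) = \dim\im(c)$. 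Proposition~\ref{prop:field}(2) then yields the stated equivalence.

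For the gradient criterion, the key observation is that $J(\tilde{c})$ is obtained from $J(c)$ by adjoining the single row $\nabla f$ (coming from the extra coordinate $z_0 = f$), so the row span of $J(c)$ is contained in that of $J(\tilde{c})$, with equality at a point exactly when $\nabla f$ lies in the row span of $J(c)$ there. Hence if $\nabla f$ is in the row span of $J(c)$ --- read as an identity of vectors over $\rr(p)$, equivalently holding at a generic parameter --- then $\mathrm{rank}\, J(\tilde{c}) = \mathrm{rank}\, J(c)$ generically, so $\dim\im(\tilde{c}) = \dim\im(c)$, and by the previous paragraph $f$ is locally identifiable from $c$. Running the same chain of equalities backwards would also recover the generic converse, though only the stated direction is needed.

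I expect the only genuinely delicate point to be the bookkeeping of the word ``generic'': the three conditions ``$\nabla f$ lies in the row span of $J(c)$'', ``$\mathrm{rank}\, J(c) = \dim\im(c)$'', and ``$\mathrm{rank}\, J(\tilde{c}) = \dim\im(\tilde{c})$'' each fail only on a proper Zariski-closed subset of $\rr^n$, so I would note at the outset that a common generic point exists and work there throughout. Beyond that, the argument is a routine translation through the dimension $=$ transcendence degree $=$ generic Jacobian rank correspondence, so I do not anticipate a substantive obstacle.
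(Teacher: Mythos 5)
Your argument is correct and is essentially the one the paper intends: the paper gives no written proof, remarking only that the result ``follows easily from Proposition \ref{prop:jacobian},'' and your reduction via the dictionary (dimension of image $=$ transcendence degree $=$ generic Jacobian rank), together with the observation that $J(\tilde{c})$ is $J(c)$ with the row $\nabla f$ adjoined and Proposition \ref{prop:field}(2), is exactly that easy derivation spelled out. No gaps.
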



\section{Finding identifiable functions}
\label{sec:finding}

The previous section showed how to check, given the coefficient
function $c$ and another function of the parameters $f$, whether
$f$ is identifiable from $c$ (under various variations on the definition
of identifiability).  In some circumstances, there are natural functions
to check for their identifiability (e.g.~the individual underlying parameters,
or certain specific functions with biological interpretations).  
However, when these fail to be identifiable, one would like tools to
discover new functions that are identifiable in a given state space model.
In practice the goal is to find a simple set of functions that generates
the field $\rr(c_1(p),...,c_m(p))$ (for globally identifiable functions),
or a set of functions $f_1, \ldots, f_k$ that are algebraic over $\rr(c_1(p),...,c_m(p))$
and such that $\rr(c_1(p),...,c_m(p)) \subseteq \rr(f_1(p),...,f_k(p))$
(for locally identifiable functions).  The notion ``simple'' is intentionally
left vague; typically, we mean functions of low degree that involve few parameters.
While there is no general purpose method guaranteed to solve these
problems, there are some useful heuristic approaches
that seem to work well in practice.
We highlight some of these methods in the present section.

One approach to find identifiable functions is to use Gr\"obner bases.
Specifically, one can find a Gr\"obner basis of the
ideal $\langle c_1(p)-c_1(p^*),c_2(p)-c_2(p^*),...,c_m(p)-c_m(p^*)\rangle
 \subseteq \rr(p^*)[p]$.   We state the main result from \cite{Meshkat2009}.   

\begin{prop} \cite[Theorem 1]{Meshkat2009} If $f(p)-f(p^*)$ is an element 
of a Gr\"obner basis of  $\langle c_1(p)-c_1(p^*),c_2(p)-c_2(p^*),...,
c_m(p)-c_m(p^*)\rangle$ 
for some elimination ordering of the parameter vector $p$, 
then $f(p)$ is globally identifiable.  
If instead $f(p)-f(p^*)$ is a factor of an element in the 
Gr\"obner basis of $\langle c_1(p)-c_1(p^*),c_2(p)-c_2(p^*),...,c_m(p)-c_m(p^*) \rangle$
for some elimination ordering of the parameter vector $p$, 
then $f(p)$ is locally identifiable.
\end{prop}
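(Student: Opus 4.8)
The plan is to prove the first statement (global identifiability when $f(p)-f(p^*)$ is itself a Gröbner basis element), and then to deduce the second statement (local identifiability when $f(p)-f(p^*)$ merely divides a Gröbner basis element) by a factorization argument. Throughout I would work in the polynomial ring $\rr(p^*)[p]$ and exploit the fact that the ideal $I = \langle c_1(p)-c_1(p^*),\ldots,c_m(p)-c_m(p^*)\rangle$ is precisely the ideal whose real variety (for generic $p^*$) is the fiber $c^{-1}(c(p^*))$, i.e.\ the set of parameter vectors that produce the same input-output equations as $p^*$. So $f$ being globally identifiable from $c$ is equivalent to saying $f$ is constant on this fiber, which, by the Nullstellensatz-type reasoning, amounts to $f(p)-f(p^*)$ vanishing on $V(I)$ — and this in turn will follow if we can show $f(p)-f(p^*) \in I$ (or lies in its radical, which for a prime/radical ideal is the same).

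First I would observe that if $f(p)-f(p^*)$ appears as an element of a Gröbner basis $G$ of $I$ with respect to any elimination ordering on the $p$ variables, then in particular $f(p)-f(p^*) \in I$, since every element of a Gröbner basis lies in the ideal it generates. Next, I would unwind what membership in $I$ means pointwise: for any $p'$ in the fiber $c^{-1}(c(p^*))$, every generator $c_i(p')-c_i(p^*)$ vanishes, hence every ideal element vanishes at $p'$, hence $f(p')-f(p^*) = 0$, i.e.\ $f(p') = f(p^*)$. Since $p^*$ was generic and $p'$ ranged over an arbitrary point of its fiber, this says exactly that $c(p) = c(p^*)$ implies $f(p) = f(p^*)$ on a dense open set, which is the definition of generic global identifiability of $f$ from $c$. (If the paper intends genuine global rather than generic global identifiability, one appeals to the fact that $I$ defined over $\rr(p^*)$ captures the whole scheme-theoretic fiber, so the vanishing is identical, not merely generic.)

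For the second statement, suppose instead that some $h(p) \in G$ factors as $h = (f(p)-f(p^*)) \cdot q(p)$ for some $q \in \rr(p^*)[p]$. Then $h \in I$, so for every $p'$ in the fiber we get $(f(p')-f(p^*))\,q(p') = 0$, hence for each such $p'$ either $f(p') = f(p^*)$ or $q(p') = 0$. The set $W = V(I) \cap V(q)$ is a proper subvariety of the fiber $V(I)$ unless $q \in I$ (which we may assume is not the case, else $h \in I^{?}$ degenerately — one handles this edge case by noting $f(p)-f(p^*)$ would then itself essentially witness identifiability). Thus on the dense subset $V(I) \setminus W$ of the fiber we still have $f = f(p^*)$, so $f$ takes the value $f(p^*)$ on a full-dimensional component, and the remaining points lie on the lower-dimensional $W$; a dimension count shows $f$ restricted to the fiber takes only finitely many values, or equivalently $f$ is algebraic over $\rr(c_1(p),\ldots,c_m(p))$, which by Proposition \ref{prop:field}(2) is exactly local identifiability. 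The main obstacle here is making the passage from ``$f(p)-f(p^*)$ divides a Gröbner basis element'' to ``$f$ is algebraic over the coefficient field'' fully rigorous: one must rule out the degenerate possibility that $q \in I$ (so the factor $f(p)-f(p^*)$ carries no information) and must argue carefully that the non-$W$ locus is genuinely dense in the fiber, which requires knowing the fiber is equidimensional or at least that no component is contained in $V(q)$ — this is where invoking primality of the relevant ideal, or a genericity hypothesis on $p^*$, does the real work.
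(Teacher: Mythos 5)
The paper gives no proof of this proposition; it is quoted directly from Meshkat--Eisenberg--DiStefano \cite{Meshkat2009}, so there is no in-paper argument to compare against and your attempt must be judged on its own. Your proof of the first claim is correct and is the standard one: membership $f(p)-f(p^*)\in\langle c_1(p)-c_1(p^*),\dots,c_m(p)-c_m(p^*)\rangle$ forces $f(p')=f(p^*)$ at every $p'$ in the fiber $c^{-1}(c(p^*))$, and the Gr\"obner basis and elimination ordering play no logical role beyond being the computational device that exhibits such an ideal element. The caveat you raise about denominators from $\rr(p^*)$ is the reason the conclusion is really generic global identifiability, which is the spirit in which the cited theorem is meant.

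The second claim is where there is a genuine gap, which you partly flag but do not close. From $h=(f(p)-f(p^*))\,q(p)\in I$ you conclude that $f\equiv f(p^*)$ off $W=V(I)\cap V(q)$ and then assert that a dimension count shows $f$ takes only finitely many values on the fiber. This does not follow: properness of $W$ in $V(I)$ does not make $V(I)\setminus W$ dense when the fiber is reducible --- and reducible fibers are exactly the situation in which local-but-not-global identifiability arises --- and, more seriously, knowing $f\equiv f(p^*)$ on a dense open subset of the fiber says nothing about the values of $f$ on a component contained in $V(q)$, where $f$ could vary continuously; so algebraicity of $f$ over $\rr(c_1(p),\dots,c_m(p))$ is not established by this route. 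The degenerate case is also real, not just an edge case to wave at: for a non-reduced Gr\"obner basis one can manufacture elements $(f(p)-f(p^*))(c_1(p)-c_1(p^*))\in I$ for an arbitrary $f$, so some nondegeneracy of the cofactor must be imposed and used. The argument that actually closes the claim is local at the diagonal rather than global on the fiber: assuming $q$ does not vanish at $p=p^*$ (generically in $p^*$), continuity gives neighborhoods on which $q(p';p'')\neq 0$ for all base points $p''$ near $p^*$, whence $c(p')=c(p'')$ forces $f(p')=f(p'')$ for $p',p''$ in that neighborhood. This is precisely the neighborhood definition of local identifiability in the paper and requires no control over the far-away components of the fiber, which is what your dimension count cannot supply.
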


In practice, the Gr\"obner basis computations can be performed by picking 
a random point $p^*$ and computing a Gr\"obner basis  in the
ring $\rr[p]$.  This certifies identifiability with high probability.  
The elimination ordering is used since elements in the Gr\"obner basis at the
end of the order are likely to be sparse.

The main issue with the Gr\"obner basis approach to finding identifiable functions
is that it is unclear \textit{a priori} how many Gr\"obner bases
one needs to find in order to generate a full set of
algebraically independent identifiable functions.  
Since Gr\"obner basis computations can become computationally expensive,
we provide another approach to find identifiable functions in this paper,
using linear algebra with the Jacobian matrix $J(c)$.  
Specifically, we describe a sort of converse of  
Proposition \ref{prop:jacident}, which allows us to take
appropriate elements in the row span of $J(c)$ and deduce
that they came from an identifiable function.  
We first prove a result in the homogeneous case
and then extend to arbitrary coefficient maps via homogenization.

\begin{thm} \label{thm:homid} Let $c_i$ be a homogeneous function 
of degree $d_i$, corresponding to a coefficient of the input-output equations. 
Let $v = f_1(c) \nabla c_1 + f_2(c) \nabla c_2 + ... + f_m(c) \nabla c_m$ be 
a vector in the span of $J(c)$ over the field $\rr(c_1(p),...,c_m(p))$
(that is, each $f_i \in \rr(c_1(p),...,c_m(p))$).  
Then the dot product $v \cdot p$ is a  rationally identifiable function.
If each $f_i$ is locally identifiable then $v \cdot p$
is locally identifiable.

\end{thm}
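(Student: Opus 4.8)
The plan is to reduce the entire statement to Euler's identity for homogeneous functions together with the field-theoretic characterization of identifiability in Proposition~\ref{prop:field}. The key input is that, since $c_i$ is homogeneous of degree $d_i$, it satisfies the Euler relation
\[
\nabla c_i \cdot p = d_i \, c_i .
\]
For polynomial $c_i$ this is classical; for rational $c_i$ it follows by differentiating $c_i(\lambda p) = \lambda^{d_i} c_i(p)$ with respect to $\lambda$ and setting $\lambda = 1$, valid on the dense open subset of $\Theta$ where $c_i$ and its partials are defined.

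First I would compute $v \cdot p$ directly. Writing $v = \sum_{i=1}^m f_i(c)\,\nabla c_i$ with each $f_i(c) \in \rr(c_1(p),\dots,c_m(p))$ and dotting with $p$, the Euler relation gives
\[
v \cdot p = \sum_{i=1}^m f_i(c)\,(\nabla c_i \cdot p) = \sum_{i=1}^m d_i\, f_i(c)\, c_i .
\]
The right-hand side is visibly a rational function of $c_1(p),\dots,c_m(p)$: with $\phi(z_1,\dots,z_m) := \sum_i d_i f_i(z)\,z_i$ one has $v \cdot p = \phi(c(p))$ on a dense open subset of $\Theta$, which is precisely the definition of rational identifiability of $v \cdot p$ from $c$.

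For the second assertion I would use Proposition~\ref{prop:field}(2): a function of $p$ is locally identifiable from $c$ exactly when it is algebraic over $\rr(c_1(p),\dots,c_m(p))$. Since the elements of $\rr(p)$ algebraic over $\rr(c_1(p),\dots,c_m(p))$ form an intermediate field, in particular a ring closed under $\rr$-linear combinations and products, and since each $c_i$ lies in $\rr(c_1(p),\dots,c_m(p))$ while each $f_i$ is (by hypothesis) algebraic over it, the combination $v \cdot p = \sum_i d_i f_i c_i$ is again algebraic over $\rr(c_1(p),\dots,c_m(p))$; Proposition~\ref{prop:field}(2) then yields local identifiability.

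I do not expect a genuine obstacle: once the Euler relation is in place the proof is a two-line computation plus a closure-of-algebraic-elements argument. The points that require care are bookkeeping — verifying Euler in the rational (not merely polynomial) case, and keeping track of the dense open set on which the $f_i$, the gradients, and the representation of $v$ are defined — and one mildly delicate expository choice: reading ``each $f_i$ is locally identifiable'' in the second half as permitting the coefficients of the $\nabla c_i$ in $v$ to be arbitrary locally identifiable functions of $p$ (i.e.\ elements of $\rr(p)$ algebraic over $\rr(c)$) rather than honest rational functions of $c$.
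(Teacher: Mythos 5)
Your argument is correct and is essentially the paper's own proof: both apply Euler's homogeneous function theorem to get $v\cdot p=\sum_i d_i f_i c_i$, conclude rational identifiability since this is a rational function of $c_1(p),\dots,c_m(p)$, and handle the locally identifiable case by noting that the elements algebraic over $\rr(c_1(p),\dots,c_m(p))$ form a field. The paper merely phrases the computation in matrix form as $v\cdot p=fJ(c)p$ with $J(c)p=(d_1c_1,\dots,d_mc_m)^T$; your extra remarks on the rational-function case of Euler's identity and on the dense open set of definition are careful but not substantive departures.
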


To prove Theorem \ref{thm:homid} we make use of the 
Euler homogeneous function theorem.

\begin{prop}[Euler's Homogeneous Function Theorem]  
Let $f$ be a homogeneous function of degree $d$.  
Then $ f = \dfrac{1}{d} \displaystyle
\sum_i{p_i \dfrac{\partial{f}}{\partial{p_i}}}$.  
\end{prop}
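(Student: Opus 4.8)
The plan is to derive the Euler relation directly from the defining scaling property of a homogeneous function by differentiating in the scaling parameter. Recall that $f$ being homogeneous of degree $d$ means precisely that the identity $f(\lambda p_1, \ldots, \lambda p_n) = \lambda^d f(p_1, \ldots, p_n)$ holds identically in $\lambda$ on the relevant domain. This single identity contains all the information we need, and the proof amounts to a one-variable differentiation followed by a specialization.

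First I would fix $p = (p_1, \ldots, p_n)$ and regard both sides of the scaling identity as functions of the single variable $\lambda$. Differentiating the right-hand side $\lambda^d f(p)$ with respect to $\lambda$ gives $d \lambda^{d-1} f(p)$. Differentiating the left-hand side $f(\lambda p)$ by the multivariate chain rule gives $\sum_i p_i \frac{\partial f}{\partial p_i}(\lambda p)$, since $\frac{d}{d\lambda}(\lambda p_i) = p_i$. Equating the two derivatives yields the identity $\sum_i p_i \frac{\partial f}{\partial p_i}(\lambda p) = d \lambda^{d-1} f(p)$, valid for all $\lambda$. Specializing to $\lambda = 1$ then gives $\sum_i p_i \frac{\partial f}{\partial p_i}(p) = d\, f(p)$, and dividing by $d$ produces the claimed formula.

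Since this is a classical identity, there is no genuinely hard step; the only points requiring a little care are the regularity and domain hypotheses. The chain-rule differentiation requires $f$ to be differentiable, which holds for the polynomial and rational coefficient functions considered in this paper away from the poles of any rational $f$, and both the scaling identity and the chain rule remain valid on the open dense set where the denominator is nonzero. The one genuine caveat is the degenerate case $d = 0$: there the left-hand sum vanishes identically and the formula as written involves division by zero, so the statement is understood with $d \neq 0$, which is the relevant case for the coefficient functions appearing in the input-output equations.
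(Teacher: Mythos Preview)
Your proof is correct and is the standard argument for Euler's theorem: differentiate the homogeneity identity $f(\lambda p) = \lambda^d f(p)$ in $\lambda$ and set $\lambda = 1$. The paper itself does not prove this proposition; it simply states it as a classical result and invokes it in the proof of the subsequent theorem, so there is nothing to compare against beyond noting that your argument is the expected one.
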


\begin{proof}[Proof of Theorem \ref{thm:homid}]
Let $f = (f_1, \ldots, f_m)$ be the row vector of the $f_i's$.  
The function $v \cdot p$ has the form
\[
v \cdot p  =   f J(c) p.
\]
The rows of $J(c)$ are the gradients of the $c_i$'s.  Since these functions are
homogeneous, we have that $J(c)p =  (d_1 c_1(p), \ldots, d_m c_m(p))^T$
by Euler's homogeneous function theorem.  But then
\[
v \cdot p  =   f J(c) p = f (d_1 c_1(p), \ldots, d_m c_m(p))^T  =
\sum_{i = 1}^m  f_i(p) d_i c_i(p)
\]
which expresses $v \cdot p$ as a polynomial function in elements of $\rr(c_1(p),...,c_m(p))$,
so $v \cdot p$ is rationally identifiable.  If each $f_i$ were locally
identifiable, $v \cdot p$ would belong to an algebraic extension of 
$\rr(c_1(p),...,c_m(p))$ and hence be locally identifiable.
\end{proof}

Theorem \ref{thm:homid} must be used in conjunction with 
Gaussian elimination and Proposition \ref{prop:garcia} or \ref{prop:jacident}.
Indeed, our strategy in implementations is to attempt
Gaussian elimination cancellations starting with the Jacobian
matrix $J(c)$.  At each step when we want to perform an
elementary operation, we use Proposition \ref{prop:garcia} or \ref{prop:jacident}
to check whether the corresponding multiplier is rationally identifiable or
locally identifiable.  An approach based completely on linear algebra
would only make use of Proposition \ref{prop:jacident} in which case we
only deduce local identifiability.

\begin{ex} \label{ex:2comp5} Let $c$ be the map $p \mapsto (c_1(p), c_2(p), c_3(p))$ from the linear 2-compartment model in Example \ref{ex:2comp4}.  Then the Jacobian $J(c)$ is given by
$$
\begin{pmatrix}
-1 & -1 & 0 & 0 \\
a_{22} & a_{11} & -a_{21} & -a_{12} \\
0 & -1 & 0 & 0
\end{pmatrix}.
$$
Then applying Gaussian elimination over $\rr(c_1(p),c_2(p),c_3(p))$, we obtain:
$$
\begin{pmatrix}
-1 & 0 & 0 & 0 \\
0 & -1 & 0 & 0 \\
0 & 0 & -a_{21} & -a_{12}
\end{pmatrix}.
$$
This implies that $-a_{11}, -a_{22}$ and $-2a_{12}a_{21}$ are all locally identifiable.  Thus, $a_{11}, a_{22}$ and $a_{12}a_{21}$ are locally identifiable.
\end{ex}

\begin{rmk} Note that in Example \ref{ex:2comp4}, we obtained that the functions $a_{11}, a_{22}$ and $a_{12}a_{21}$ are rationally identifiable, whereas in Example \ref{ex:2comp5}, we only obtained that the functions $a_{11}, a_{22}$ and $a_{12}a_{21}$ are locally identifiable.  This is the cost of not using a Gr\"obner basis.  
\end{rmk}  

\begin{rmk} The identifiable functions obtained using linear algebra on 
the Jacobian matrix depend heavily on the specific column ordering of the 
Jacobian matrix chosen.  Thus, for a given column ordering (corresponding 
to a given parameter ordering), we may not generate the ``simplest''  
locally identifiable functions.  We do, however, always generate identifiable 
functions, as opposed to the Gr\"obner basis approach, in which there is no 
guarantee of generating elements/factors of elements of the form $f(p)-f(p^*)$ for 
a given elimination ordering $p$.
\end{rmk}

\begin{ex} From the SIR Model in Example \ref{ex:SIR2}, we can form the following coefficient map, ignoring constant coefficients:
$$
c(k,N,\mu,\gamma,\beta)=(-\beta \mu + \mu^2 + \mu \gamma, \frac{(\mu + \gamma)\beta} {k N}, \mu, \frac{\beta} {k N})
$$
thus we obtain the following Jacobian with respect to the parameter ordering $(k,N,\mu,\gamma,\beta)$:
$$
\begin{pmatrix}
0  & 0 & -\beta + 2 \mu + \gamma  & \mu & \mu \\
\frac{-(\mu + \gamma)\beta} {k^2 N} & \frac{-(\mu + \gamma)\beta} {k N^2} & \frac{\beta} {k N} & \frac{\beta} {k N} & \frac{(\mu + \gamma)} {k N} \\
0 & 0 & 1 & 0 & 0 \\
\frac{-\beta} {k^2 N} & \frac{-\beta} {k N^2} & 0 & 0 & \frac{1} {k N} \\
\end{pmatrix}
$$
from this we get the row-reduced Jacobian:
$$
\begin{pmatrix}
N  & k & 0  & 0 & 0 \\
0 & 0 & 1 & 0 & 0 \\
0 & 0 & 0 & 1 & 0 \\
0 & 0 & 0 & 0 & 1 
\end{pmatrix}.
$$
Thus, dotting each row vector with $p$ and dividing each polynomial by their respective degrees,
we find that $k N, \mu, \gamma, \beta $ are 
locally identifiable.  
\end{ex}


When the coefficient functions $c_i(p)$ are not homogeneous functions, 
we can homogenize the functions by some variable $z$ and 
add $z$ to the list $c$ of identifiable functions.  This results
in a similar identifiability result.

\begin{thm} \label{thm:nonhomid} Let $\tilde{c_i}$ be the homogenization of the coefficient
function $c_i$ and suppose it has degree $d_i$. 
Let $v = f_1(\tilde{c},z) \nabla \tilde{c_1} + f_2(\tilde{c},z) \nabla \tilde{c_2} 
+ ... + f_m(\tilde{c},z) \nabla \tilde{c_m}$ 
be a vector in the span of $J(\tilde{c},z)$ 
over the field $\rr(\tilde{c_1}(p,z),...,\tilde{c_m}(p,z),z)$.  
Then the dot product ${v \cdot (p,z) | }_{z=1}$ is a rationally identifiable function.
If $f_1, \ldots, f_m$ are locally identifiable  given $c$ then 
${v \cdot (p,z) | }_{z=1}$ is locally identifiable. 
\end{thm}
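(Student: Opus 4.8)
The plan is to reduce Theorem \ref{thm:nonhomid} to the homogeneous case already established in Theorem \ref{thm:homid}, using homogenization. First I would introduce, for each $i$, the homogenization $\tilde{c_i}(p,z) = z^{d_i}\, c_i(p/z)$, which is homogeneous of degree $d_i$ in the $n+1$ variables $(p_1,\dots,p_n,z)$ and satisfies $\tilde{c_i}(p,1) = c_i(p)$. I then form the augmented coefficient map $\hat c \colon \rr^{n+1} \to \rr^{m+1}$, $(p,z)\mapsto (\tilde{c_1}(p,z),\dots,\tilde{c_m}(p,z),z)$; every component of $\hat c$ is homogeneous (the last one of degree $1$), and $z$ itself is trivially identifiable from $\hat c$ since it occurs as one of the coordinates. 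The reason $z$ is adjoined to the list is precisely that the multipliers $f_i$ are allowed to depend on $z$. The Jacobian $J(\tilde c,z)$ appearing in the statement is exactly the Jacobian of $\hat c$ taken with respect to the full variable list $(p_1,\dots,p_n,z)$, so each row $\nabla \tilde{c_i}$ is the genuine gradient on $\rr^{n+1}$ of a homogeneous function; this is what makes Euler's theorem available.

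Next I would apply Theorem \ref{thm:homid} directly to $\hat c$ (whose proof uses only the homogeneity of the components) and to the vector $v = \sum_{i=1}^m f_i(\tilde c,z)\,\nabla \tilde{c_i}$, which lies in the span of $J(\tilde c,z)$ over the field $\rr(\tilde{c_1}(p,z),\dots,\tilde{c_m}(p,z),z)$. That theorem, via Euler's homogeneous function theorem applied in the $n+1$ variables (so that $\nabla \tilde{c_i}\cdot(p,z) = d_i\,\tilde{c_i}(p,z)$), yields
\[
v\cdot(p,z) \;=\; \sum_{i=1}^m f_i(\tilde c,z)\, d_i\, \tilde{c_i}(p,z),
\]
which is a polynomial expression in the field generators $\tilde{c_1}(p,z),\dots,\tilde{c_m}(p,z),z$, hence rationally identifiable from $\hat c$.

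Finally I would dehomogenize by setting $z=1$. Because $\tilde{c_i}(p,1)=c_i(p)$, the displayed identity specializes to ${v\cdot(p,z)\,|}_{z=1} = \sum_{i=1}^m f_i(c(p),1)\, d_i\, c_i(p)$, which is a rational function of $c_1(p),\dots,c_m(p)$; therefore ${v\cdot(p,z)\,|}_{z=1}$ is rationally identifiable from $c$. For the local statement, I would note that if each $f_i$ is only locally identifiable given $c$ --- i.e.\ its specialization $f_i|_{z=1}$ is algebraic over $\rr(c_1(p),\dots,c_m(p))$ rather than lying in that field --- then ${v\cdot(p,z)\,|}_{z=1}$ lies in an algebraic extension of $\rr(c_1(p),\dots,c_m(p))$ and so is locally identifiable, just as in the proof of Theorem \ref{thm:homid}. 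In practice the $f_i$ are the multipliers produced by Gaussian elimination on $J(\tilde c,z)$, checked one step at a time with Proposition \ref{prop:garcia} or \ref{prop:jacident}.

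The step I expect to be the main obstacle is verifying that the dehomogenization $z\mapsto 1$ is compatible with identifiability: one must argue that a rational (resp.\ algebraic) relation witnessing identifiability over $\rr(\tilde{c_1}(p,z),\dots,\tilde{c_m}(p,z),z)$ specializes to a genuine such relation over $\rr(c_1(p),\dots,c_m(p))$. This holds on a dense open subset of $\Theta$, since the substitution $z=1$ merely replaces each $\tilde{c_i}$ by $c_i$ and $z$ by the constant $1$, and the relevant denominators and leading coefficients remain nonzero generically; but it is where the ``generic'' qualifiers in the definitions of identifiability genuinely come into play, and it should be stated with some care.
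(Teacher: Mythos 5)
Your proposal is correct and follows essentially the same route as the paper: apply Theorem \ref{thm:homid} to the homogenized system with $z$ adjoined as an extra identifiable coordinate, then specialize $z=1$ and use $\tilde{c_i}(p,1)=c_i(p)$ to land in (an algebraic extension of) $\rr(c_1(p),\ldots,c_m(p))$. Your explicit Euler identity $v\cdot(p,z)=\sum_i f_i(\tilde c,z)\,d_i\,\tilde{c_i}(p,z)$ and your remark about checking that denominators survive the substitution $z=1$ generically make the specialization step more transparent than the paper's brief ``clearly,'' but the argument is the same.
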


\begin{proof} 
Clearly $v \cdot (p,z)$ is rationally identifiable over the field 
$\rr(\tilde{c_1}(p,z),...,\tilde{c_m}(p,z),z)$ by Theorem \ref{thm:homid}.  
We need to show that setting $z=1$ preserves identifiability.  
Since $f(p,z)=v \cdot (p,z)$ is algebraic over the field 
$\rr(\tilde{c_1}(p,z),...,\tilde{c_m}(p,z),z)$, then clearly 
$f(p,z)|_{z=1}$ is algebraic over the field $\rr(\tilde{c_1}(p,z)|_{z=1},...,
\tilde{c_m}(p,z)|_{z=1},z|_{z=1})$.  
Since $\tilde{c_i}(p,z)|_{z=1}$ is precisely $c_i(p)$, 
then $f(p,z)|_{z=1}$ is in the field $\rr(c_1(p),...,c_m(p))$.  If $f_1, \ldots, f_m$
are algebraic over $\rr(\tilde{c_1}(p,z),...,\tilde{c_m}(p,z),z)$ then 
${v \cdot (p,z) | }_{z=1}$ is algebraic over $\rr(c_1(p),...,c_m(p))$.
\end{proof}

\begin{ex} Let $c$ be the map $(p_1,p_2,p_3) \mapsto (p_{1}^2, p_{1}^2+p_{1}p_{3}+p_{1}p_{2}^{2}p_{3})$.  Then the homogenized map $\tilde{c}$ is the map $(p_1,p_2,p_3,z) \mapsto (p_{1}^2, p_{1}^2 z^2+p_{1}p_{3} z^2+p_{1}p_{2}^{2}p_{3})$.  Then the Jacobian $J(\tilde{c},z)$ is given by
$$
\begin{pmatrix}
2p_1 & 0 & 0 & 0\\
2p_{1} z^2 + p_{3} z^2 + p_{2}^2p_{3} & 2p_{1}p_{2}p_{3} & p_{1} z^2+ p_{1}p_{2}^2 & 2p_{1}^2 z + 2p_{1}p_{3}z\\
0 & 0 & 0 & 1
\end{pmatrix}.
$$
Then applying Gaussian elimination over $\rr(\tilde{c_1}(p,z),\tilde{c_2}(p,z),z)$, we obtain:
$$
\begin{pmatrix}
1 & 0 & 0 & 0\\
0 & 2p_{2}p_{3} & z^2+p_{2}^2 & 2(p_{1} + p_{3})z\\
0 & 0 & 0 & 1
\end{pmatrix}.
$$
Thus, dotting each row vector with $(p,z)$, we obtain $p_{1}, 3p_{2}^2p_{3}+2p_{1}z^2+3p_{3}z^2$, and $z$ are locally identifiable.  Dividing by the degree and setting $z=1$, we obtain that $p_{1}$ and $p_{2}^2p_{3}+2p_{1}/3+p_{3}$ are locally identifiable.
\end{ex}


\section{Observability}

In this section we explore how algebraic and combinatorial tools
can be used to determine whether or not the state variables are observable.
Roughly speaking, the state variable $x_i$  is \emph{observable} if 
it can be recovered from observation of the input and output alone. 
We will use algebraic language to make this precise and explain
how Gr\"obner bases and matroids can be used to check this condition.

\begin{defn}
Consider a state space model of form (\ref{eq:statespacebasic}).
\begin{itemize}
\item  The state variable $x_i$ is 
\emph{generically observable} given the input and
output trajectories and generic parameter value $p$
if there is a unique trajectory for $x_i$ compatible
with the given input/output trajectory.
\item  The state variable $x_i$ is \emph{rationally observable} 
given input and output trajectories and generic parameter 
value $p$ if there is a rational function $F$ 
such that the trajectory  $x_i(t)$ satisfies 
$x_i(t)  =  F(y, y', \ldots, u, u', \ldots, p)$.   
\item  The state variable $x_i$ is \emph{generically locally observable} if 
given a generic parameter vector, there is an open neighborhood $U_{x_i}$ of the trajectory
$x_i(t)$ such that there is no other trajectory $\tilde{x}_i(t) \subseteq U_{x_i}$
that is compatible with input/output data.
\item  The state variable $x_i$ is \emph{generically unobservable} if
given the input and output trajectories and a generic parameter value $p$
there are infinitely many trajectories for $x_i$ compatible with the
given input/output trajectory.
\end{itemize}
\end{defn}

As usual, when $f$ and $g$ are polynomial functions, we can give equivalent definitions
to many of these conditions, and algebraic methods for checking them.

The following proposition gives algebraic conditions for observability.  More details on the differential algebra involved can be found in \cite{Glad}.

\begin{prop}\label{prop:diffalgobs}
Consider a state space model of form (\ref{eq:statespacebasic}).
Let $\Pi$ be the differential ideal generated the polynomials
$x' - f(x,p,u),  y - g(x,p)$.  Let $h \in 
\Pi \cap \qq(p)[x_i, y, y', \ldots, u, u', \ldots]$
be a polynomial and write this as $h =  \sum_{j = 0}^k h_j  x_i^j$
where each $h_j \in \qq(p)[ y, y', \ldots, u, u', \ldots]$, $k \geq 1$, and
$h_k  \notin \Pi$.  Then
\begin{itemize}
\item  If $k = 1$, then $x_i$ is rationally observable.
\item  If $k > 1$, then $x_i$ is locally observable.
\item  If there is no polynomial $h \in 
\Pi \cap \qq(p)[x_i, y, y', \ldots, u, u', \ldots]$
satisfying the three conditions then $x_i$ is generically unobservable.
\end{itemize}
\end{prop}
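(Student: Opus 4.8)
The plan is to leverage the elimination polynomial $h$ to extract the constraint it places on the trajectory $x_i(t)$, and then interpret the three cases in terms of how many solutions for $x_i$ that constraint admits, given the observed data. The key conceptual point is that if $(x(t), y(t), u(t))$ is any trajectory compatible with the model, then it satisfies every element of the differential ideal $\Pi$, and in particular it satisfies $h = 0$. So along the trajectory we have $\sum_{j=0}^k h_j(y,y',\ldots,u,u',\ldots)\, x_i^j = 0$, which, once the input $u$ and output $y$ are fixed, is a polynomial equation of degree $k$ in the single unknown function $x_i(t)$ at each time $t$.

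First I would handle the case $k = 1$: here the equation reads $h_1 x_i + h_0 = 0$, and since $h_1 \notin \Pi$, for a generic parameter value and generic enough input the coefficient $h_1$ evaluated along the trajectory is not identically zero, so $x_i = -h_0/h_1$, exhibiting $x_i$ as a rational function $F(y,y',\ldots,u,u',\ldots,p)$ of the observed data; this is exactly rational observability. For $k > 1$, the same equation $\sum_{j=0}^k h_j x_i^j = 0$ has, generically, at most $k$ solutions for $x_i$ at each time, and these solutions vary algebraically (hence continuously) with the data; thus only finitely many branches are compatible with the input/output data, and for a given trajectory there is a neighborhood isolating it from the other branches. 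This gives generic local observability. In both cases the nonvanishing of $h_k$ off $\Pi$ is what guarantees the degree does not collapse on the relevant trajectories. For the converse direction, if no such $h$ exists, then in the prime differential ideal picture $x_i$ is transcendental over $\qq(p)(y,y',\ldots,u,u',\ldots)$, so the compatible trajectories for $x_i$ form a positive-dimensional family — infinitely many — which is generic unobservability; this is the differential-algebra analogue of Proposition \ref{prop:field}(3) and Proposition \ref{prop:garcia}(3), and I would cite \cite{Glad} for the precise differential-algebraic setup that makes "no polynomial $h$" equivalent to transcendence.

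The main obstacle I expect is making rigorous the passage from "the trajectory satisfies every element of $\Pi$" to statements about the generic parameter value and generic input — that is, controlling the genericity hypotheses so that $h_k$ (and $h_1$ in the first case) really is nonzero along the trajectories under consideration, and so that the finitely many algebraic branches in the case $k>1$ are genuinely locally separated rather than crossing. This requires knowing that $\Pi$ (or its relevant prime component) is prime and that $h_k \notin \Pi$ means $h_k$ does not vanish on the variety of trajectories, which is where the differential-algebra machinery of \cite{Glad} — characteristic sets, the structure of the differential ideal, and the fact that the input may be taken sufficiently general — does the real work. I would state these genericity conditions carefully and otherwise keep the argument parallel to the identifiability results already proved, since the formal structure (rational vs.\ algebraic vs.\ transcendental over a base field) is identical, with $\qq(p)(y,y',\ldots,u,u',\ldots)$ playing the role that $\rr(c_1(p),\ldots,c_m(p))$ played there.
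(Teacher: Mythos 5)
The paper does not actually prove this proposition --- it is stated with a pointer to \cite{Glad} for the underlying differential algebra --- so there is no in-paper argument to measure yours against; your sketch is the natural one and runs parallel to the identifiability trichotomy in Propositions \ref{prop:field} and \ref{prop:garcia}, as you note. Two points should be made explicit for the sketch to stand as a proof. First, the primality of $\Pi$ is not just a convenience: the quotient of the differential polynomial ring by $\Pi$ is isomorphic to $\qq(p)[x,u,u',u'',\ldots]$ (each generator has the form of a variable minus a polynomial in lower variables), hence is a domain, and this is what makes ``$h_k \notin \Pi$'' equivalent to ``$h_k$ does not vanish on generic trajectories'' and makes the trichotomy rational\,/\,algebraic\,/\,transcendental over $\qq(p)(y,y',\ldots,u,u',\ldots)$ well defined. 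Second, in the case $k>1$, ``at most $k$ roots at each time $t$'' does not by itself isolate the trajectory, since branches may cross; the clean fix is to replace the given $h$ by the minimal polynomial of $x_i$ over the observation field (which exists precisely because the hypotheses force $x_i$ to be algebraic over that field), whose separability in characteristic zero guarantees the roots are generically distinct along the trajectory and hence that the branches are locally separated in the sense required by the definition of generic local observability. With those two additions, and with the genericity-of-input issues delegated to \cite{Glad} exactly as the paper itself does, your argument is correct.
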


As with computations for finding the input/output equations, one
does not need to explicitly use the differential algebra to check the
conditions of Proposition \ref{prop:diffalgobs}, and it is possible
to do this directly via Gr\"obner bases and properties of the 
Jacobian matrix.

\begin{prop}\label{prop:observablegrobner}
Consider a state space model of the form (\ref{eq:statespacebasic})
where $f$ and $g$ are polynomial functions and where there
are $N$ state-space variables, $M = 1$ output variable, and $R$ input
variables.
Let $P$ be the ideal 
\begin{multline*}
\langle x' - f(x,p,u), \, \ldots, \, 
x^{(N-1)} -  \frac{d^{N-2}}{dt^{N-2}}f(x,p,u), \,   
y - g(x,p), \, \ldots, \,  y^{(N-1)} - \frac{d^{N-1}}{dt^{N-1}}g(x,p) \rangle  \\
\subseteq \qq(p)[x,y,u, x', y', u', \ldots, x^{(N-2)}, y^{(N-2)}, u^{(N-2)}, x^{(N-1)}, y^{(N-1)}].
\end{multline*}
Consider an elimination ordering $<$  on 
$\qq(p)[x,y,u, x', y', u', \ldots, x^{(N-2)}, y^{(N-2)}, u^{(N-2)}, x^{(N-1)}, y^{(N-1)}]$
with three blocks of variables
\[
\{x, x', \ldots, x^{(N-1)}\}  \setminus \{x_i\}   \quad >  \quad   \{x_i\} \quad
>  \quad \{y,u, y', u', \ldots, y^{(N-2)}, u^{(N-2)}, y^{(N-1)} \}.
\]
Then a Gr\"obner basis for $P$ with respect to $<$ will contain
a polynomial of the type indicated in Proposition \ref{prop:diffalgobs}
if it exists. Otherwise no such polynomial exists. 
\end{prop}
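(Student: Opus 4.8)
The plan is to mirror the proof of Proposition \ref{prop:grobnerio}, using the algebraic matroid of the prime ideal $P$, but now tracking a single state variable $x_i$ rather than eliminating all state variables. First I would observe that $P$, with the same carefully chosen lexicographic term order as in Proposition \ref{prop:grobnerio}, has initial ideal $\langle x', \ldots, x^{(N-1)}, y, \ldots, y^{(N-1)} \rangle$, so $P$ is prime and a complete intersection. Hence we may speak of the associated algebraic matroid on the ground set of all variables, and the statement that $\Pi \cap \qq(p)[x_i, y, y', \ldots, y^{(N-1)}, u, u', \ldots, u^{(N-1)}]$ (equivalently $P$ intersected with that subring) contains a nonzero polynomial is equivalent to saying that $\{x_i, y, u, y', u', \ldots, y^{(N-1)}, u^{(N-1)}\}$ is dependent in that matroid.

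Next I would run the dimension count. Here $P$ is generated by $2N$ equations ($N$ from the $x$-dynamics, $N$ from the output and its derivatives), in a ring with $N \cdot N$ variables among $x, x', \ldots, x^{(N-1)}$, plus $N$ variables among $y, \ldots, y^{(N-1)}$, plus $RN$ variables among $u, \ldots, u^{(N-1)}$. Since $P$ is a complete intersection it has codimension $2N$, so $\dim P = N^2 + N + RN - 2N = N^2 - N + RN$. Wait — I should double-check which set we are testing: the set $\{x_i\} \cup \{y, u, y', u', \ldots, y^{(N-1)}, u^{(N-1)}\}$ has $1 + N + RN$ elements. For this to be forced dependent we would need $1 + N + RN > \dim P$, which fails for $N \geq 2$; so a crude global dimension count does not suffice here, and this is exactly why the statement is conditional (``if it exists''). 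Therefore the right framing is not an unconditional existence claim but the equivalence: a Gröbner basis of $P$ with respect to the stated three-block elimination order $<$ contains a polynomial of the form required in Proposition \ref{prop:diffalgobs} if and only if such a polynomial lies in $\Pi \cap \qq(p)[x_i, y, y', \ldots, u, u', \ldots]$ at all.

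So the core of the argument is the standard elimination property of Gröbner bases under block orderings (see \cite{Cox}): with the order $<$ having blocks $\{x, x', \ldots, x^{(N-1)}\} \setminus \{x_i\} > \{x_i\} > \{y, u, \ldots, y^{(N-1)}\}$, the elements of a Gröbner basis $\cg$ of $P$ that lie in the subring $\qq(p)[x_i, y, u, \ldots, y^{(N-1)}]$ form a Gröbner basis of the elimination ideal $P \cap \qq(p)[x_i, y, u, \ldots, y^{(N-1)}]$. I would then argue: if some $h \in \Pi \cap \qq(p)[x_i, y, y', \ldots, u, u', \ldots]$ of the type in Proposition \ref{prop:diffalgobs} exists, then truncating to derivatives of order at most $N-1$ (which suffices, since $\Pi$ restricted to those variables is captured by $P$ — here I would invoke the same reasoning as in Proposition \ref{prop:diffalgobs} and the construction of $P$) yields a nonzero element of $P \cap \qq(p)[x_i, y, u, \ldots, y^{(N-1)}]$; this elimination ideal is then nonzero, so $\cg$ contains a nonzero element of the subring. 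That element, written as a polynomial in $x_i$ with coefficients in $\qq(p)[y, u, \ldots, y^{(N-1)}]$, has some leading coefficient $h_k$; if $h_k \in P$ (equivalently $h_k \in \Pi$) one divides it out or passes to a different generator — and here one uses primality of $P$ to ensure the ideal is ``honest,'' so a generator with leading coefficient outside $P$ exists. Conversely, if no such $h$ exists anywhere in $\Pi$, then in particular the Gröbner basis cannot contain one, since every Gröbner basis element lies in $P \subseteq \Pi$.

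I expect the main obstacle to be the bookkeeping around the leading-coefficient condition ``$h_k \notin \Pi$'': a Gröbner basis element in the elimination ideal need not automatically have its $x_i$-leading coefficient outside $P$, and one must use primality of $P$ together with a minimal-degree-in-$x_i$ choice among Gröbner basis elements to extract one that does. A secondary subtlety is justifying rigorously that restricting attention to derivatives up to order $N-1$ loses nothing — i.e., that if a valid $h$ exists in the full differential ideal $\Pi$ then one exists using only $x_i, x_i', \ldots$ and outputs/inputs up to order $N-1$ — which follows from the structure of $\Pi$ and the defining relations of $P$ but deserves an explicit sentence. Modulo these points, the proof is a direct transcription of the matroid/complete-intersection setup of Proposition \ref{prop:grobnerio} combined with the elimination theorem for block term orders.
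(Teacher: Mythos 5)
Your overall architecture (reduce to derivatives of order at most $N-1$, then invoke the elimination property of block orderings) matches the paper's, and your treatment of the Gr\"obner basis step is fine --- indeed somewhat more careful than the paper's about the leading-coefficient condition $h_k \notin \Pi$. But there are two genuine problems. First, your dimension count is wrong: the ideal $P$ of this proposition is generated by $N(N-1)$ equations from the state dynamics (the vector equations $x^{(j)} - \frac{d^{j-1}}{dt^{j-1}}f$ for $j = 1, \ldots, N-1$, each with $N$ components) plus $N$ output equations, so its codimension is $N^2$, not $2N$; moreover the inputs only go up to $u^{(N-2)}$, contributing $R(N-1)$ variables, not $RN$. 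The correct count gives $\dim P = N^2 + N + R(N-1) - N^2 = N + R(N-1)$, while the set $\{x_i, y, u, y', u', \ldots, y^{(N-2)}, u^{(N-2)}, y^{(N-1)}\}$ has $1 + N + R(N-1)$ elements, so it \emph{is} forced to be dependent. Your conclusion that ``a crude global dimension count does not suffice'' rests on this miscalculation, and your explanation of the conditionality is off: the ``if it exists'' is there because the guaranteed relation may fail to involve $x_i$ nontrivially (it could be a pure input-output relation), not because no relation is guaranteed.

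Second, and more importantly, the step you dismiss as ``a secondary subtlety \ldots deserves an explicit sentence'' --- that nothing is lost by truncating at order $N-1$ --- is the main content of the paper's proof, and you do not supply the argument. Without it, the ``otherwise no such polynomial exists'' direction is unproven: the Gr\"obner basis of $P$ might contain no polynomial of the desired type while one exists in $\Pi$ at higher differential order. The paper closes this by arguing that if every relation at order $N-1$ fails to involve $x_i$ nontrivially, adding the order-$N$ derivatives cannot produce one: the only new generator containing $y^{(N)}$ is $y^{(N)} - \frac{d^{N}}{dt^{N}}g(x,p)$, and any appearance of $y^{(N)}$ in a putative constraint involving $x_i$ can be eliminated using the order-$N$ input-output equation, so that generator need not be added; but without it each component of $x^{(N)}$ appears in only a single new generator, so those generators cannot participate in any elimination yielding a relation among $x_i$ and the input/output variables, and one is reduced to the order-$(N-1)$ system. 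Some version of this argument must appear; pointing back to ``the construction of $P$'' does not discharge it.
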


The proof of Proposition \ref{prop:observablegrobner} is a combination 
of the ideas of Propositions \ref{prop:grobnerio} and \ref{prop:garcia}.

\begin{proof}
First we need to show that we can find such a polynomial, if it exists,
only looking up to derivatives of order $N-1$.  This follows a similar
argument as the proof of Proposition \ref{prop:grobnerio} by dimension counting.
The codimension of $P$ is $N(N-1)+N$, the total number of variables in our polynomial
ring is $N^2+N+R(N-1)$, and thus $P$ has dimension $N+R(N-1)$.
Since the total number of variables in the set 
$\{x_i,y, u, y', u', \ldots, y^{(N-2)}, u^{(N-2)}, y^{(N-1)}\}$
is $1+N+R(N-1)$, these variables must be dependent, i.e.~there must exist a relation.  
If all the relations that exist do not involve $x_i$ in a nontrivial way,
there will not exist such relations if we add more derivatives.
Indeed, adding one more set of derivatives then there must exist an
input-output equation involving the variable $y^{(N)}$ and lower order
terms in $y$, by the proof of Proposition 
\ref{prop:grobnerio}.  Hence these could be used to eliminate any appearance
of $y^{(N)}$ or higher in any putative constraint involving $x_i$.  
Since the only equation in our system that involves $y^{(N)}$ was the equation
$y^{(N)} - \frac{d^{N}}{dt^{N}}g(x,p)$, this means we need not have added it
to our system since it cannot be eliminated by interacting with other equations.
However, without this equation, there is only a single appearance of $x^{(N)}$,
so there is no way to eliminate those variables that involves using those equations,
and hence we are reduced to our system just up to order $N-1$.

Now we will show that the Gr\"obner basis computation produces the desired equation.
Suppose there is an equation $h$ of the desired type in the ideal $P$.
If the Gr\"obner basis of $P$ did not contain a polynomial of the desired type,
then the Gr\"obner basis of $P$ does not contain a polynomial
in the variables $\{x_i, y,u, y', u', \ldots, y^{(N-2)}, u^{(N-2)}, y^{(N-1)} \}$
that involves the variable $x_i$.
Then reducing $h$ by the Gr\"obner basis cannot produce the zero polynomial,
contradicting that we had a Gr\"obner basis.
\end{proof}

Proposition \ref{prop:observablegrobner} can be generalized
to situations where there is more than one output variable.  Indeed, from Proposition \ref{prop:grobneriomultiple}, we can obtain input-output equations for each $y_i$.  Following a similar dimension counting argument, we obtain that $P$ has dimension $N+R(N-1)$ and the total number of variables in the set $\{x_i,y, u, y', u', \ldots, y^{(N-2)}, u^{(N-2)}, y^{(N-1)}\}$ is $1+MN+R(N-1)$, thus these variables must be dependent, i.e.~there must exist a relation.
In this case, one might be able to get away with looking at derivatives
of lower orders in some of the variables (i.e.~not all the way to $N-1$)
however this will depend on the structure of the underlying system.
Making this precise depends on terminology from differential algebra
that we would like to avoid.  See \cite{Glad} for details.  
One typical corollary is the following.

\begin{cor}\label{cor:orderobserve}
Consider a state space model of the form (\ref{eq:statespacebasic})
where $f$ and $g$ are polynomial functions and where there
are $N$ state-space variables, $M = 1$ output variable, and $R$ input
variables.  If the input/output equation has order $N$, then
all the state space variables are locally observable.
\end{cor}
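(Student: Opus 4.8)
The plan is to run the matroid/dimension-count argument of Propositions \ref{prop:grobnerio} and \ref{prop:observablegrobner} one more time, but now using the order hypothesis to pin down the rank exactly. Work with the prime ideal $P$ of Proposition \ref{prop:observablegrobner}, living in the ring $\qq(p)[x,y,u,x',y',u',\ldots,x^{(N-2)},y^{(N-2)},u^{(N-2)},x^{(N-1)},y^{(N-1)}]$, and set
$T := \{y,u,y',u',\ldots,y^{(N-2)},u^{(N-2)},y^{(N-1)}\}$, the collection of all input/output variables in this ring, so that $|T| = N + R(N-1)$. As recalled in the cited proofs, $P$ is prime (a suitable lexicographic order gives it a squarefree prime monomial initial ideal), so it has an associated algebraic matroid, and the same codimension count shows $\dim P = N + R(N-1) = |T|$.

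First I would show that $T$ is independent in the algebraic matroid of $P$. A dependency among the elements of $T$ is precisely a nonzero element of $P \cap \qq(p)[y,u,y',u',\ldots,y^{(N-1)}]$, i.e.\ a differential-algebraic relation between $y$ and $u$ of order at most $N-1$ in $y$ (there are no relations among the $u^{(j)}$ alone, since $u$ is a differential indeterminate). The hypothesis says the input/output equation has order exactly $N$, so no relation of order $\leq N-1$ exists; hence $T$ is independent. Since $|T| = \dim P$, the set $T$ is a basis of the matroid.

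Next, fix a state variable $x_i$. Because $x_i \notin T$ and $T$ is a basis, $x_i$ lies in the matroid closure of $T$; equivalently, $x_i$ is algebraic over the subfield $\qq(p)(T) \subseteq \Frac(\qq(p)[\,\cdot\,]/P)$. Taking a minimal polynomial of $x_i$ over $\qq(p)(T)$ and clearing denominators yields $h \in P \cap \qq(p)[x_i,y,u,y',u',\ldots,y^{(N-1)}]$ with $h = \sum_{j=0}^{k} h_j x_i^j$, $k \geq 1$, and $h_k \notin P$ (the leading coefficient of a minimal polynomial does not vanish modulo $P$). Every relation in $P$ is also a relation in the differential ideal $\Pi$ of Proposition \ref{prop:diffalgobs}, so $h$ meets the hypotheses of that proposition; therefore $x_i$ is locally observable (indeed rationally observable when $k = 1$). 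As $i$ was arbitrary, all state variables are locally observable.

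The step I expect to be the main obstacle is the independence claim for $T$: one must argue carefully that "the input/output equation has order $N$" genuinely excludes \emph{every} polynomial relation among the variables of $T$ — this requires knowing that the order is measured in $y$ and that the differential elimination ideal contains no relation of lower order, together with the fact that the input $u$ contributes no relations among its own derivatives. This is exactly where the differential-algebra framework of \cite{Glad} is needed rather than purely commutative-algebra bookkeeping; once it is in place, the dimension count and the matroid closure argument are routine.
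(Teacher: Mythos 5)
Your proposal is correct and follows essentially the same route as the paper: the dimension count from Proposition \ref{prop:observablegrobner} forces a relation among $\{x_i\}\cup T$, and the order-$N$ hypothesis rules out a relation lying purely in $T$, so the relation must involve $x_i$. Your version merely recasts this in basis/closure language and adds the (correct) detail that the minimal polynomial of $x_i$ over $\qq(p)(T)$ supplies the polynomial $h$ with $h_k\notin P$ required by Proposition \ref{prop:diffalgobs}.
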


\begin{proof}
The proof of Proposition \ref{prop:observablegrobner} shows that 
after adding the $N-1$ derivatives, there must exist a relation among
the set 
$\{x_i,y, u, y', u', \ldots, y^{(N-2)}, u^{(N-2)}, y^{(N-1)}\}$.
However, this could not be just among the set of variables 
$\{y, u, y', u', \ldots, y^{(N-2)}, u^{(N-2)}, y^{(N-1)}\}$ since this would be an
input/output equation of order $<N$.
\end{proof}

\begin{ex} 
From Example \ref{ex:2comp2}, let our model be of the form:
$$
\begin{pmatrix} 
x_1' \\
x_2' \end{pmatrix} = {\begin{pmatrix} 
a_{11} & a_{12} \\
a_{21} & a_{22} 
\end{pmatrix}} {\begin{pmatrix}
x_1 \\
x_2 \end{pmatrix} } + {\begin{pmatrix}
u_1 \\
0 \end{pmatrix}},
\quad \quad  y=x_1.$$ 
Taking derivatives, we have the system of equations: \[
\langle a_{11}x_1+a_{12}x_2+u_1-x_1', \,  a_{21}x_1+a_{22}x_2-x_2', \,  x_1-y, \,  x_1'-y' \rangle.
\]
These are polynomials in the polynomial ring 
$\rr(p)[x_1,x_2,x_1',x_2',u_1,y,y']$.

Using the elimination order specified to calculate a Gr\"obner basis, 
we see that $a_{11}y_1+a_{12}x_2+u_1-y'$ and $x_1-y$ are two polynomials 
of the desired form.  Thus the model is rationally observable.
Alternatively, the input-output equation for this model is of 
differential order $2$, which equals the number of state variables,
so the model is locally observable by Corollary \ref{cor:orderobserve}.
\end{ex}

The main problem with this definition of observability is 
that appears to require explicit computation of the desired polynomials. 
However, instead of applying a Gr\"obner 
basis to find the desired polynomials, 
we can examine the algebraic matroid associated to this system.

The algebraic matroid is equivalent to the linear matroid
of differentials, for which computations are much simpler.
Because the definition of observability distinguishes between a
variable and its derivatives, we also treat them separately in our
discussion. The ground set of the 
matroid for an observability computation is
\[E = \left\{
\begin{array}{l| p{1cm}r}
x_i,x_i',x_i'',\ldots,x_i^{(N-1)}; & &  \forall i=1,\ldots,N \\
y_j,y_j',\ldots,y_j^{(N-1)}; & & \forall j = 1,\ldots, M \\
u_k,u_k',\ldots,u_k^{(N-2)}; & & \forall k = 1,\ldots, R \\
\end{array}
\right\} \]

We can treat the system of ODEs as an ideal of algebraic
relations among a set of indeterminates. Use these relations
to define the associated Jacobian matrix.

This matrix has $N^2 + MN + R(N-1)$
columns, one for each ``variable'' in the ground set, and $(N-1)N + (N-1)M$ rows,
one for each relation. The entries in the matrix are polynomials in
$\rr[x,x',\ldots,x^{(N-1)}, y,y',\ldots,y^{(N-1)},u,u',\ldots,u^{(N-2)}]$.
The final step is Gaussian elimination in the Jacobian matrix.
Unlike the strategy in Section~\ref{sec:finding}, any rational function
is permitted here.

\begin{ex} We approach observability of Example \ref{ex:2comp2}
using the algebraic matroid.
The resulting matroid has rank three, with 23 bases and 14 circuits.
We can sort this list to find the circuits including $x_1$ and $x_2$ while
excluding $x_1'$ and $x_2'$; we find the following circuits:
\[
\begin{array}{lll}
\{x_1, y_1\}, & \{x_2, u_1, y_1, y_1'\}, & \text{ and  }  \{x_1, x_2, u_1, y_1'\}.\\
\end{array}
\]
The third circuit contains both variables, so is not useful
for proving observability; but the first two circuits 
constitute a proof of observability.
\end{ex}

\begin{ex} From the SIR Model in Example \ref{ex:SIR1}, let our ODE system be of the form:
$$ S'= \mu N - \beta S I - \mu S $$
$$ I'= \beta S I - (\mu + \gamma) I $$
$$ R'= \gamma I - \mu R $$
$$ y= k I .$$
Taking derivatives, we have the system of equations:
\begin{multline*}
\langle S' + \mu S + \beta S I - \mu N, \, 
S'' + \mu S' + \beta S I' + \beta S' I, \, \\  
 I' + (\mu+\gamma) I - \beta S I, \, 
I'' + (\mu+\gamma) I' - \beta S' I - \beta S I', \, \\
 R' + \mu R - \gamma I, \, 
R'' + \mu R' - \gamma I', \,
y - k I, \,
y' - k I', \,
y'' - k I'' \rangle.
\end{multline*}
These are polynomials in the polynomial ring 
$\rr(p)[S,I,R,S',I',R',S'',I'',R'',y,y',y'']$.

Using the elimination order specified to calculate a Gr\"obner basis,
we find that there are no polynomials in $y, y', y''$ and $R$ only, so the model is generically unobservable.  More precisely, we find the polynomial $-k R' - k \mu R + \gamma y$, but no polynomial involving $y, y', y''$ and $R$ only.

We use a similar strategy to compute the matroid for the SIR Model.
The ground set of the algebraic
matroid for the observability computation is
\[E = \left\{
\begin{array}{cccc}
S, S', S'', &
I, I', I'', &
R, R', R'', &
y,y',y''
\end{array}
\right\} \]

The matroid has rank three, with 123 bases and 146 circuits.
We can sort this list to find the circuits 
including $S$, $I$, and $R$ while
excluding their derivatives; we find the following circuits for each
variable:
\[
\begin{array}{llllll}
\{S, y, y'\} & \{S, y, y''\} &  \{S, y', y''\} &
& \{I, y\}  &  \{I, y', y''\} \\
\end{array}
\]
Any relation in the first row proves that $S$ is observable; similarly,
any relation in the second row proves that $I$ is observable.
No relation from $R$ exists; an elimination of the original ideal proves that
$R$ has no relations that do not also involve its derivatives.

In the linear matroid of differentials 
this is made more pronounced. In {\tt Macaulay2},
the command {\tt kernel(transpose(jacobian(I)))} 
yields a matrix whose row vectors
correspond to variables. The vectors 
corresponding to $R,R'$, and $R''$ are
nonzero in a coordinate where all other 
variables are zero. Therefore, any relation
including {\em one} of $\{R,R',R''\}$ must include at least two.
\end{ex}


\section{Indistinguishability}

Recall two state space models are \emph{indistinguishable} if for any
choice of parameters in the first model, there is a choice of parameters
in the second model that will yield the same dynamics in both models, and
vice versa.  There have been several definitions and approaches to solve this problem in the literature \cite{Godfrey,Raksanyi,Walter1984,Zhang}. 
Here we approach the problem by looking at the input-output equations of the models
and using computational algebra to check indistinguishability.

To start with, to be indistinguishable, two models must have the same
input and output variables.  Since indistinguishable models give the same
dynamics, the structures of their input-output equations should be the same.
In the case that there is one output variable in both models, there is a single
input-output equation.  To say the input-output equations  have the same structure means that
exactly the same differential monomials appear in both input-output equations.

\begin{rmk}
When there are multiple outputs, there will be multiple input-output equations.
To make a unique choice, one should fix a specific monomial order on the 
polynomial ring $\qq(p)[y,u,y', u', \ldots, y^{(N-1)}, u^{(N-1)}, y^{(N)}]$ and
compare the differential monomials appearing in the reduced Gr\"obner bases
of the corresponding ideals.
\end{rmk}

Supposing that the two models have the same structures as described above,
we can let $c(p)$ and $c'(p')$ denote the corresponding coefficient maps of
the two models, respectively.  Here $c : \Theta \rightarrow \rr^m$ and 
$c':  \Theta' \rightarrow \rr^m$, and the components are ordered so that the
components correspond to each other as coming from the same differential
monomial.   Note that the dimensions of the parameter spaces $\Theta$ and $\Theta'$
might be different. We further assume that both coefficient maps are monic on the same coefficient. Indistinguishability is characterized in terms of
the coefficient maps $c$ and $c'$.

\begin{defn}
Suppose that Model 1 and Model 2 have the same input-output structure.
Let $c:  \Theta \rightarrow \rr^{m}$ and $c':  \Theta^{'} \rightarrow \rr^{m}$ 
be the coefficient maps for Model 1 and Model 2, respectively.  We say that:
\begin{itemize}
\item  Model 1 and Model 2 are \emph{indistinguishable} if
for all  $p' \in \Theta'$, there exists at least one  
$p\in \Theta$ such that $c(p)=c'(p')$, and vice versa;
\item  Model 1 and Model 2 are \emph{generically indistinguishable} if,
for almost all $p' \in \Theta'$, there exists at least one  
$p\in \Theta$ such that $c(p)=c'(p')$, and vice versa;
\item  Model 1 and Model 2 are \emph{generically distinguishable}
if they are not generically indistinguishable.
\end{itemize}
\end{defn}

\begin{rmk}
The definition of indistinguishability is equivalent to saying that
$c(\Theta) =  c'(\Theta')$.  The definition of generic indistinguishability
is equivalent to saying that the symmetric difference of $c(\Theta) \triangle c'(\Theta')$
is a set of measure zero.  The definition of generic distinguishability
is equivalent to the existence of an open subset $U \subseteq \Theta$ such that
for all $p \in U$, there is no $p' \in \Theta'$ such that $c(p) = c'(p')$ or the
symmetric condition for $\Theta'$. 
\end{rmk}

A simple observation on distinguishability is that indistinguishable models
must have the same vanishing ideal on the image of the parametrization.
This is usually easy to check in small to medium sized examples.
Once the same vanishing ideal has been established, 
an approach for checking indistinguishability is
to construct the equation system $c(p) = c'(p')$ and attempt to
``solve'' for one set of parameters in terms of the other, and vice versa, using
Gr\"obner basis calculations.  Once this has been done, one must check the
resulting solutions to determine if they satisfy the necessary inequality
constraints of the parameter spaces $\Theta$ and $\Theta'$.  We note that identifiable models with coefficient maps satisfying the same algebraic dependence relationships can always be solved for one set of parameters in terms of the other, and vice versa, but the parameter constraints must still be checked for indistinguishability to hold.

\begin{ex} Consider the following two models,
each of which has three parameters:
\[
\begin{pmatrix} 
x_1' \\
x_2' \\
x_3' \end{pmatrix} = {\begin{pmatrix} 
-a_{01}-a_{21} & 0 & 0 \\
a_{21} & -a_{32} & 0 \\
0 & a_{32} & 0
\end{pmatrix}} {\begin{pmatrix}
x_1 \\
x_2 \\
x_3 \end{pmatrix} } + {\begin{pmatrix}
u_1 \\
u_2 \\
0 \end{pmatrix}}
\quad \quad  y_1=x_3  \]

\[
\begin{pmatrix} 
x_1' \\
x_2' \\
x_3' \end{pmatrix} = {\begin{pmatrix} 
-b_{21} & 0 & 0 \\
b_{21} & -b_{02}-b_{32} & 0 \\
0 & b_{32} & 0
\end{pmatrix}} {\begin{pmatrix}
x_1 \\
x_2 \\
x_3 \end{pmatrix} } + {\begin{pmatrix}
u_1 \\
u_2 \\
0 \end{pmatrix}}
\quad \quad  y_1=x_3. 
\]
The input-output equations for the models are: 
 \begin{alignat*}{7}
 y_1'''& + (a_{32}+a_{01}+a_{21})&y_1''& +
(a_{01}a_{32}+a_{21}a_{32})& y_1'& =& a_{21}a_{32}u_1& + 
a_{32}u_2'&+(a_{01}a_{32} + a_{21}a_{32})&u_2, \\
 y_1'''& + (b_{21}+b_{02}+b_{32})& y_1''& +  (b_{21}b_{02}+b_{21}b_{32})&y_1'& =& b_{21}b_{32}u_1& + b_{32}u_2'& +(b_{32}b_{21})&u_2.
\end{alignat*}
respectively.  
The corresponding coefficient maps are
\[\begin{array}{lclrrrr}
c(a_{01}, a_{21}, a_{32}) &  = &
( a_{32}+a_{01}+a_{21}, & a_{01}a_{32}+a_{21}a_{32}, &
a_{21}a_{32}, &a_{32}, &  a_{01}a_{32} + a_{21}a_{32}), \\[2mm]
c'(b_{02}, b_{21}, b_{32}) & = & (b_{21}+b_{02}+b_{32}, &
 b_{21}b_{02}+b_{21}b_{32}, &  b_{21}b_{32}, &
b_{32}, &b_{32}b_{21}).
\end{array}
\]
The vanishing ideal for model $1$ in the polynomial ring $\qq[c_1, c_2, c_3, c_4, c_5]$
is
\[
\langle c_2 - c_5, \: \: c_1c_4 - c_4^2 - c_5 \rangle
\]
whereas the vanishing ideal for model $2$ is
\[
\langle c_3 - c_5, \: \: c_2c_4^2 - c_1c_4c_5 + c_5^2  \rangle.
\]
Since the two vanishing ideals are not equal, the models are generically
distinguishable.
\end{ex}

\begin{ex} Consider the following variation on the previous example,
where we have simply moved an input from compartment $2$ to compartment $3$.
\[
\begin{pmatrix} 
x_1' \\
x_2' \\
x_3' \end{pmatrix} = {\begin{pmatrix} 
-a_{01}-a_{21} & 0 & 0 \\
a_{21} & -a_{32} & 0 \\
0 & a_{32} & 0
\end{pmatrix}} {\begin{pmatrix}
x_1 \\
x_2 \\
x_3 \end{pmatrix} } + {\begin{pmatrix}
u_1 \\
0 \\
u_3 \end{pmatrix}}
\quad \quad  y_1=x_3  \]

\[
\begin{pmatrix} 
x_1' \\
x_2' \\
x_3' \end{pmatrix} = {\begin{pmatrix} 
-b_{21} & 0 & 0 \\
b_{21} & -b_{02}-b_{32} & 0 \\
0 & b_{32} & 0
\end{pmatrix}} {\begin{pmatrix}
x_1 \\
x_2 \\
x_3 \end{pmatrix} } + {\begin{pmatrix}
u_1 \\
0 \\
u_3 \end{pmatrix}}
\quad \quad  y_1=x_3.   \]

The input-output equations for these models are: \begin{small} \medmuskip=1mu
\thinmuskip=1mu
\thickmuskip=1mu
\begin{alignat*}{8}
y_1'''& + (a_{32}+a_{01}+a_{21})y_1''& + (a_{01}a_{32}+a_{21}a_{32})y_1'&= &
a_{21}a_{32}u_1& + u_3''& +(a_{01}+a_{21}+a_{32})u_3'&+(a_{01}a_{32} + a_{21}a_{32})u_3\\ 
y_1'''& + (b_{21}+b_{02}+b_{32})y_1''& + (b_{21}b_{02}+b_{21}b_{32})y_1'&= &
b_{21}b_{32}u_1& + u_3''& +(b_{02}+b_{32}+b_{21})u_3'& +(b_{02}b_{21} + b_{32}b_{21})u_3,
\end{alignat*} \end{small}
respectively.
In both cases, the vanishing ideal of the model coefficients is the ideal
\[
\langle c_2 - c_5, c_1 - c_4 \rangle,
\]
which suggests that the two models might be indistinguishable.
A simple Jacobian calculation shows that the models are locally identifiable,
and hence we can attempt to solve the system $c(p) = c'(p')$ to
test for indistinguishability.
Solving the system of equations:
$$ a_{32}+a_{01}+a_{21} = b_{21}+b_{02}+b_{32} $$
$$ a_{01}a_{32}+a_{21}a_{32} = b_{21}b_{02}+b_{21}b_{32} $$
$$ a_{21}a_{32} = b_{21}b_{32} $$
we obtain the solutions: 
$$ \left\{ a_{21} = b_{32}, a_{01} = b_{02}, a_{32} = b_{21} \right\}$$
$$\left\{ a_{21} = (b_{21}b_{32})/(b_{02} + b_{32}), 
  a_{01} = (b_{02}b_{21})/(b_{02} + b_{32}), a_{32} = b_{02} + b_{32} \right\}$$
Likewise, one can obtain the solutions:
$$ \left\{ b_{21} = a_{32}, b_{02} = a_{01}, b_{32} = a_{21} \right\}$$
$$\left\{ b_{32} = (a_{21}a_{32})/(a_{01} + a_{21}), 
  b_{02} = (a_{01}a_{32})/(a_{01} + a_{21}), b_{21} = a_{01} + a_{21} \right\}$$
The parameter spaces for these models have all parameters positive.
It is easy to see that for any choice of parameters in the first
model, there is a choice of parameters in the second model that
gives the same input-output equation, and vice versa.
Thus these models are indistinguishable.  Note that
there are two solutions because the models are locally but not
globally identifiable.
\end{ex}

\begin{ex} Now consider the following variation of the previous models,
 where we have added an extra leak parameter to each model and removed the inputs:

\[
\begin{pmatrix} 
x_1' \\
x_2' \\
x_3' \end{pmatrix} = {\begin{pmatrix} 
-a_{01}-a_{21} & 0 & 0 \\
a_{21} & -a_{32} & 0 \\
0 & a_{32} & -a_{03}
\end{pmatrix}} {\begin{pmatrix}
x_1 \\
x_2 \\
x_3 \end{pmatrix} } 
\quad \quad  y_1=x_3  \]

\[
\begin{pmatrix} 
x_1' \\
x_2' \\
x_3' \end{pmatrix} = {\begin{pmatrix} 
-b_{21} & 0 & 0 \\
b_{21} & -b_{02}-b_{32} & 0 \\
0 & b_{32} & -b_{03}
\end{pmatrix}} {\begin{pmatrix}
x_1 \\
x_2 \\
x_3 \end{pmatrix} } 
\quad \quad  y_1=x_3.  \]

The input-output equations for these models is:\begin{small} \medmuskip=0mu
\thinmuskip=0mu
\thickmuskip=0mu
\begin{alignat*}{17}
&y_1''' + (a_{01}+a_{21}+a_{32}+a_{03})&y_1'' + & 
(a_{01}a_{32}&+&a_{21}a_{32}&+&a_{32}a_{03}&+&a_{01}a_{03}&+&a_{21}a_{03})&y_1' &+ 
(a_{01}a_{03}a_{32}+a_{03}a_{21}a_{32})& y_1 & =  0 \\
&y_1''' + (b_{21}+b_{02}+b_{32}+b_{03})&y_1'' + & 
(b_{21}b_{02}&+&b_{21}b_{32}&+&b_{02}b_{03}&+&b_{03}b_{21}&+&b_{03}b_{32})&y_1' &+ 
(b_{02}b_{03}b_{21}+b_{03}b_{21}b_{32})& y_1& = 0
\end{alignat*} \end{small}
In both cases, the vanishing ideal of the model coefficients is the zero ideal which suggests that the two models might be indistinguishable.  These models are clearly unidentifiable since there are $3$ coefficients in $4$ unknown parameters.  
Solving the system $c(p)=c'(p')$, we get the following $6$ solutions:
$$\left\{a_{03} = b_{03}, a_{21} = -a_{01} + b_{02} + b_{32}, a_{32} = b_{21}\right\}$$
$$\left\{a_{03} = b_{03},  a_{21} = -a_{01} + b_{21}, a_{32} = b_{02} + b_{32}\right\}$$
$$\left\{a_{03} = b_{21},   a_{21} = -a_{01} + b_{02} + b_{32}, a_{32} = b_{03}\right\}$$
$$\left\{a_{03} = b_{21},   a_{21} = -a_{01} + b_{03}, a_{32} = b_{02} + b_{32}\right\}$$ 
$$\left\{a_{03} = b_{02} + b_{32},   a_{21} = -a_{01} + b_{21}, a_{32} = b_{03}\right\}$$
$$\left\{a_{03} = b_{02} + b_{32},  a_{21} = -a_{01} + b_{03}, a_{32} = b_{21}\right\}$$
when solving for $\left\{a_{01},a_{21},a_{32},a_{03}\right\}$.  A similar result follows when solving for $\left\{b_{21},b_{02},b_{32},b_{03}\right\}$.  Thus the models are indistinguishable.
\end{ex}

\begin{rmk} Note that the vanishing ideals being equal is only a 
necessary condition for indistinguishability but not in general sufficient.  
For example, suppose that we restrict to the parameter space consisting of
positive parameters and consider  the coefficient maps $c(p_1,p_2)=(p_1,p_1+p_2)$ and $c'(p_1',p_2')=(p_1'+p_2',p_2')$.
The images in both cases have zero vanishing ideal.
However, the models are distinguishable since the image of the first coefficient map
is $\{ (c_1, c_2) \in \rr^2 :  c_2 > c_1 > 0 \}$ whereas the image of the second coefficient map
is $\{ (c_1, c_2) \in \rr^2 :  c_1 > c_2 > 0 \}$.  
\end{rmk}

\begin{rmk}
Some authors also consider a one-sided notion of indistinguishability.
In this definition, Model $1$ is indistinguishable from Model $2$ if every for
every choice of parameters in Model $1$, there is a choice of parameters in 
Model $2$ that can produce the same dynamics.  So Model $2$ is a more
expressive class of models.  It is more difficult to check for this type
of indistinguishability because it need not be the case that the input-output 
equations have the same structure, and so we cannot simply check that the
image of the coefficient map of Model $1$ is contained in the image of the coefficient
map of Model $1$.  As a simple example, if Model $1$
has input-output equation $y' + a_1y = 0$, and Model $2$ has input-output
equation $y'' + b_1y' + b_2 y = 0$, clearly Model $1$ is indistinguishable from Model 
$2$, but this is not detectable by comparing the image of the coefficient maps.
\end{rmk}


\section{Further Reading}

We have demonstrated some techniques to test identifiability, observability, and indistinguishability using a differential algebraic approach.  There are several other approaches to investigate these concepts, so we outline a few of these other methods now for the interested reader.  

For linear models, the global identifiability problem can be solved with the transfer function approach \cite{Bellman} and the similarity transformation approach \cite{WalterLinear, Walter1981}.  For nonlinear models, the differential algebra method has been a powerful technique to test for identifiability \cite{Ljung, Ollivier, Saccomani}.  The main advantage of the differential algebra method is that \textit{global} identifiability can be determined.  On the other hand, there are many approaches to test \textit{local} identifiability, including the Taylor series method \cite{Pohjanpalo}, the generating series method \cite{Walter1982} (with implementations involving identifiability \textit{tableaus} \cite{Balsa-Canto} and exact arithmetic rank \cite{Karlsson}), a method based on the implicit function theorem \cite{Wu, Xia}, a test for reaction networks \cite{Craciun, Davidescu}, and a profile likelihood approach \cite{Raue}.  There are special cases where global identifiability can be determined using a nonlinear variation of the similarity transformation approach \cite{Chappell, Denis-Vidal1996,Vajda} and the direct test \cite{Denis-Vidal2000, Denis-Vidal2001}.  These approaches for global and local identifiability are outlined in greater detail and tested on several models in \cite{Chis} and \cite{Raue2014}.  

For linear models, the concept of observability can be tested using a linear algebra test \cite{Kalman}.  These conditions can be translated to conditions on the graph of the linear compartmental models \cite{Godfrey, Zazworsky}.  For nonlinear models, observability can be tested with differential algebra \cite{Glad, Lu} .  Alternatively, the nonlinear problem has been approached analytically in \cite{Hermann}.  To test local algebraic observability, one can use a probabilistic seminumerical method that solves the problem in polynomial time \cite{Sedoglavic}.    

For linear models, the indistinguishability problem has been analyzed using geometrical rules \cite{Godfrey} and a linear algebra test \cite{Zhang}.  For nonlinear models, indistinguishability was introduced in \cite{Sussmann}.  The problem has been extensively studied for certain classes of nonlinear compartmental models in \cite{Chapman1994,Chapman1996,Godfrey1994,Walter1996} and more generally in \cite{Evans2004}.

This paper is concerned with state space models, but identifiability
and related concepts are also explored heavily in other contexts.
Beltrametti and Robbiano \cite{Beltrametti2012} consider the ideal     
$\langle c_1(p)-c_1(p^*),c_2(p)-c_2(p^*),...,c_m(p)-c_m(p^*)\rangle$
for detecting identifiability in the context of the Hough transform.
Other areas include study of identifiability of graphical models \cite{Allman2015,Drton2011,Foygel2012,
Garcia2010, Tian2010} and identifability of phylogenetic models 
\cite{Allman2011, Long2015, Matsen2008, Rhodes2012}.


\section{Appendix: Algebraic Matroids}

We review the basics of general matroid theory here and especially
main results on algebraic matroids.

\begin{defn}
Let $E$ be a finite set and let $\mathcal{I}$ be a collection of subsets
of $E$ satisfying the following three conditions:
\begin{enumerate}
\item  $\emptyset \in \cali$ 
\item  If $X \in \cali$ and $Y \subseteq X$ then $Y \in \cali$, and
\item If $X,Y \in \cali$ with $|X| < |Y|$ then there exists $y \in Y$ such that
$X \cup \{y\} \in \cali$.
\end{enumerate}
The pair $(E, \cali)$ is called a \emph{matroid} and the elements of $\cali$
are called \emph{independent sets}.
\end{defn}

A special instance of matroids are sets $E$ of vectors in a vector space
where the set $\cali$ consists of all linearly independent subsets of $E$.
A matroid that arises in this way is called a \emph{representable} or a
\emph{linear matroid}.
Various other terminology from linear algebra is also applied in
matroid theory.  A maximal cardinality subset of $\cali$ is
called a \emph{basis}.  The subsets of $E$ that are not in $\cali$ are
called \emph{dependent sets}.  A minimal dependent set is called a \emph{circuit}.
Oxley's text \cite{Oxley} is  a standard reference for background on matroids.

The most important type of matroid for us will be the algebraic matroids whose
properties we review here.

\begin{prop}\cite[Thm 6.7.1]{Oxley}   Suppose $\kk$ is an extension field of a field $\ff$ 
and $E$ is a finite subset of $\kk$.  Then the collection $\cali$ of 
subsets of $E$ that are algebraically independent over 
$\ff$ is the set of independent sets of a matroid on $E$.
The resulting matroid is called an \emph{algebraic matroid}.
\end{prop}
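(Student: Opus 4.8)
The plan is to verify the three matroid axioms directly for the family $\cali$ of subsets of $E$ that are algebraically independent over $\ff$. Axiom~(1) is vacuous, since the empty set is algebraically independent, and axiom~(2) is immediate: any algebraic dependence among the elements of a subset $Y \subseteq X$ is in particular a dependence among the elements of $X$, so independence passes to subsets. The real content is the exchange axiom~(3).

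For axiom~(3) I would argue by contradiction, translating everything into the language of transcendence degree. Suppose $X, Y \in \cali$ with $|X| < |Y|$; since $|Y| > |X| \ge |X \cap Y|$ there is some $y \in Y \setminus X$, and we want one such $y$ with $X \cup \{y\} \in \cali$. Suppose instead that $X \cup \{y\}$ is algebraically dependent over $\ff$ for every $y \in Y \setminus X$. Because $X$ itself is independent, this says exactly that each such $y$ is algebraic over $\ff(X)$; the elements of $X \cap Y$ are trivially algebraic over $\ff(X)$ as well, so $\ff(X \cup Y)$ is an algebraic extension of $\ff(X)$. By additivity of transcendence degree along the tower $\ff \subseteq \ff(X) \subseteq \ff(X \cup Y)$ one gets
\[
\mathrm{trdeg}_{\ff}\,\ff(X \cup Y) \;=\; \mathrm{trdeg}_{\ff}\,\ff(X) \;=\; |X|,
\]
since $X$ is a transcendence basis of $\ff(X)/\ff$. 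But $Y$ is algebraically independent over $\ff$ and contained in $\ff(X \cup Y)$, so $\mathrm{trdeg}_{\ff}\,\ff(X \cup Y) \ge |Y| > |X|$, a contradiction. Hence $X \cup \{y\} \in \cali$ for some $y \in Y$, which is axiom~(3).

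The step I expect to be the main obstacle --- and the one I would either cite or isolate as a preliminary lemma --- is the additivity of transcendence degree in a tower of field extensions, equivalently the fact that any two transcendence bases of an extension have the same cardinality. If a self-contained argument is wanted, the alternative plan is to show directly that $\mathrm{cl}(S) := \{ z \in \kk : z \text{ is algebraic over } \ff(S) \}$ is a closure operator of finite character satisfying the MacLane--Steinitz exchange property, and then invoke the standard dictionary between such closure operators and matroids. In that approach the crux is the exchange property for $\mathrm{cl}$ itself: given a nonzero polynomial over $\ff(S)$ witnessing that $z$ is algebraic over $\ff(S \cup \{w\})$ but not over $\ff(S)$, the variable $w$ must actually appear, and re-reading that same polynomial as a nonzero polynomial in $w$ with coefficients in $\ff(S \cup \{z\})$ shows $w$ is algebraic over $\ff(S \cup \{z\})$. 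That single-polynomial manipulation is the genuinely nontrivial kernel of the result; everything else is bookkeeping.
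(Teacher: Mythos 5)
Your proof is correct. The paper itself gives no argument for this proposition --- it is quoted verbatim from Oxley (Theorem 6.7.1) as background material in the appendix --- and your verification of the three axioms is exactly the standard proof found there: axioms (1) and (2) are immediate, and the exchange axiom (3) reduces, via the observation that dependence of $X \cup \{y\}$ over an independent $X$ forces $y$ to be algebraic over $\ff(X)$, to the invariance of transcendence degree (equivalently the Steinitz exchange property for algebraic closure), which you correctly identify as the one nontrivial ingredient. One small remark: you rightly prove the stronger statement with $y \in Y \setminus X$, which is the intended reading of axiom (3); as literally written in the paper the axiom would be vacuous for $y \in X \cap Y$.
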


\begin{ex} Let $E=\left\{a_{11},a_{22},a_{12},a_{21}\right\}$ and
 $\ff = \rr(c_1(p),c_2(p),c_3(p))$, where 
 $c_1(p)=-a_{11}-a_{22}, c_2(p)=a_{11}a_{22}-a_{12}a_{21}, c_3(p)=-a_{22}$ 
 from the linear 2-compartment model in Example \ref{ex:2comp4}.  
 Then $I=\left\{ \emptyset, \left\{a_{12}\right\}, \left\{a_{21}\right\}\right\}$
 and $C=\left\{\left\{a_{11}\right\}, \left\{a_{22}\right\}, \left\{a_{12}, a_{21}\right\}\right\}$.
\end{ex}

In our problem, we have the mapping 
$p \mapsto (c_1(p), c_2(p), c_3(p))$
and the variety $V$ of interest is the pre-image of a point 
$\hat{c}=(\hat{c_1},\hat{c_2},\hat{c_3})$ under the map $c$.
Note that the map $c$ has a trivial vanishing ideal; the image
of this map is the full $\rr^3$. The point $\hat{c}$ can therefore
be taken to be a generic point of $\rr^3$ by setting 
$\{\hat{c_1},\hat{c_2},\hat{c_3}\}$ to be algebraically independent
over $\rr$.
This means that the only algebraic constraints 
on the $p$-variables come from the equations
 $\left\{ c(p)=\hat{c} \right\}$.  

Our associated ideal is $P=\left\langle 
c_1(p)-\hat{c_1}, c_2(p)-\hat{c_2},
c_3(p)-\hat{c_3} \right\rangle$, which contains polynomials in 
$\rr(\hat{c})[p] = \rr(\hat{c_1},\hat{c_2},
\hat{c_3})[a_{11},a_{22},a_{12},a_{21}]$. The ideal $P$ is prime, as
confirmed by a Gr\"{o}bner basis computation 
at a randomly chosen point; therefore, computation of the 
algebraic matroid modulo $P$ is
well-defined.

\begin{prop}\cite[Prop 6.7.11]{Oxley}  If a matroid $M$ is algebraic 
over a field $\ff$ of characteristic zero, then $M$ is 
linearly representable over $\ff(T)$ for some finite set $T$ of
transcendentals over $\ff$.
\end{prop}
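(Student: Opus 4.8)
The plan is to realize $M$ explicitly as a linear matroid over the function field of an algebraic realization, using K\"ahler differentials; in characteristic $0$ that field is automatically generated over $\ff$ by transcendental elements, which is all the statement requires. For the setup, let $x_1,\dots,x_n$ be a realization of $M$ in an extension field $\kk$ of $\ff$, so that the independent sets of $M$ are exactly the subsets of $\{x_1,\dots,x_n\}$ that are algebraically independent over $\ff$, and put $F=\ff(x_1,\dots,x_n)$, $r=\mathrm{rk}(M)=\mathrm{trdeg}(F/\ff)$. Any $x_i$ algebraic over $\ff$ is a loop of $M$ and can be represented by a zero vector, so assume $M$ has no loops and each $x_i$ is transcendental over $\ff$.

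First I would pick a basis $B=\{x_1,\dots,x_r\}$ of $M$. Since $\mathrm{char}\,\ff=0$ the field $\ff(x_1,\dots,x_r)$ is perfect, so $F/\ff(x_1,\dots,x_r)$ is separable algebraic and $B$ is a separating transcendence basis of $F/\ff$; hence $\Omega_{F/\ff}$ is an $F$-vector space of dimension $r$ with basis $dx_1,\dots,dx_r$. Writing $dx_j=\sum_{i=1}^r v_{ij}\,dx_i$ for $j=1,\dots,n$ defines an $r\times n$ matrix $V=(v_{ij})$ over $F$; concretely $v_{ij}=D_i(x_j)$, where $D_i$ is the unique extension to $F$ of the derivation $\partial/\partial x_i$ of $\ff(x_1,\dots,x_r)$. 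The crux is then the standard equivalence, valid because $F/\ff(x_1,\dots,x_r)$ is separable, that a subset $\{x_j:j\in S\}$ is algebraically independent over $\ff$ if and only if $\{dx_j:j\in S\}$ is $F$-linearly independent; equivalently, $S$ is independent in $M$ if and only if the columns of $V$ indexed by $S$ are $F$-linearly independent. Thus $V$ represents $M$ linearly over $F=\ff(x_1,\dots,x_n)$, and we may take $T=\{x_1,\dots,x_n\}$, a finite set of transcendentals over $\ff$. If one prefers a smaller generating set, the primitive element theorem lets one write $F=\ff(t_1,\dots,t_r)[\theta]$ and take $T=\{t_1,\dots,t_r,\theta\}$.

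The one step that is not bookkeeping is the differential criterion above --- that $\dim_F\Omega_{F/\ff}=\mathrm{trdeg}(F/\ff)$ and that algebraic independence over $\ff$ is detected by $F$-linear independence of the associated differentials --- together with its reliance on separability of $F/\ff(x_1,\dots,x_r)$, which is precisely where $\mathrm{char}\,\ff=0$ is used. I would also note what the statement does \emph{not} claim: it does not assert that $M$ is representable over a \emph{purely} transcendental extension of $\ff$, i.e.~with $T$ algebraically independent over $\ff$. That strengthening would be the real obstacle, since a representation over such a field specializes --- avoiding the finitely many proper closed loci where the maximal minors vanish, using that $\ff$ is infinite --- to a representation over $\ff$ itself, and so it amounts to the much stronger assertion that characteristic-$0$ algebraic matroids are linear over the base field; proving that would require re-realizing $M$ by sufficiently generic rational functions over a rational field rather than merely differentiating the realization at hand.
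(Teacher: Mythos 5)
Your argument is correct and is precisely the standard proof of Ingleton's theorem that the paper cites from Oxley without reproving: in characteristic zero the algebraic matroid coincides with the linear matroid of the differentials $dx_j$ in $\Omega_{F/\ff}$, represented over $F=\ff(x_1,\dots,x_n)$, which is $\ff(T)$ for the finite set $T$ of (individually) transcendental elements realizing the non-loops. This is also exactly the mechanism the paper relies on immediately afterwards when it represents $M(P)$ via the Jacobian and its kernel, and your closing caveat --- that $T$ is not claimed to be algebraically independent over $\ff$, which would wrongly force representability over $\ff$ itself --- is the correct reading of the statement.
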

The following proposition follows from \cite[Proposition 2.14]{KRT2013} 
together with the observation that the tangent space of a variety
is the kernel of its Jacobian matrix:

\begin{prop} Let $P=\left\langle f_1,...,f_m \right\rangle$ be a prime ideal contained in $\ff[x_1,...,x_n]$.  Define the Jacobian matrix $J(P)$ as:

$$ \left( \frac{\partial f_i}{\partial x_j} : 1 \leq i \leq m, 1 \leq j \leq n \right) .$$

This matrix, when considered as a matroid with columns as the ground set and linear independence over $\Frac(\ff[x]/P)$ defining independent set $I$ represents the dual matroid to $M(P)$.  The transpose of the matrix spanning the kernel gives the matroid $M(P)$.
\end{prop}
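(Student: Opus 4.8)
The plan is to reduce the statement to an exercise in linear algebra and matroid duality over the field $K := \Frac(\ff[x]/P)$. Write $\bar x_1,\dots,\bar x_n\in K$ for the images of the coordinates; these generate $K$ over $\ff$ since $P$ is prime, and by definition $M(P)$ is the algebraic matroid of $\{\bar x_1,\dots,\bar x_n\}$ over $\ff$. First I would turn this algebraic matroid into a linear matroid over $K$ via Kähler differentials, then identify the relevant $K$-subspace with the row space of $J(P)$, and finally invoke the standard fact that linear-matroid duality corresponds to taking orthogonal complements of row spaces.

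For the first step: since the fields in question have characteristic zero, the extension $K/\ff$ is separably generated, so a subset $S\subseteq\{\bar x_i\}$ is algebraically independent over $\ff$ if and only if the differentials $\{d\bar x_i : i\in S\}$ are $K$-linearly independent in $\Omega_{K/\ff}$. Thus $M(P)$ is the linear matroid of the vectors $d\bar x_1,\dots,d\bar x_n$. By the conormal exact sequence attached to the presentation $P=\langle f_1,\dots,f_m\rangle$, the module $\Omega_{K/\ff}$ is the cokernel of the $K$-linear map $K^m\to K^n$ sending the $i$th basis vector to $df_i=\sum_j(\partial f_i/\partial x_j)\,d\bar x_j$; i.e.\ $\Omega_{K/\ff}\cong K^n/R$, where $R$ is the $K$-span of the rows of $J(P)$, and under this isomorphism $d\bar x_j$ is the image of the standard basis vector $e_j$. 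Equivalently, in the language of \cite{KRT2013} (Proposition 2.14) together with the identification of the tangent space of $V(P)$ at its generic point with $\ker J(P)\subseteq K^n$, one has $R^{\perp}=\ker J(P)$, and a coordinate subset $S$ is independent in $M(P)$ precisely when the projection of $\ker J(P)$ onto the coordinates in $S$ is surjective.

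Now the conclusion follows from a purely matroid-theoretic lemma (cf.\ \cite{Oxley}): for any $K$-subspace $R\subseteq K^n$, the matroid on ground set $[n]$ whose independent sets are the $S$ with $\operatorname{span}(e_S)\cap R=0$ — equivalently, the images of $e_1,\dots,e_n$ in $K^n/R$ — is the linear matroid represented by \emph{any} matrix whose row space is $R^{\perp}$, and its dual is the linear matroid represented by any matrix whose row space is $R$. Applying this with $R$ equal to the row space of $J(P)$, so that $R^{\perp}=\ker J(P)$: the matroid $M(P)$ is represented by any matrix whose rows span $\ker J(P)$, i.e.\ by the transpose of a matrix whose columns span $\ker J(P)$, which is the second assertion; and $M(P)^{\ast}$ is represented by any matrix with row space equal to the row space of $J(P)$, which is exactly the matroid on the columns of $J(P)$ with linear independence taken over $\Frac(\ff[x]/P)$, which is the first assertion.

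The only genuinely non-formal ingredient — and hence the main obstacle — is the first step: the equivalence between algebraic independence over $\ff$ and $K$-linear independence of the differentials, together with the presentation $\Omega_{K/\ff}\cong K^n/R$ with $R$ the row space of $J(P)$. The former requires separable generation (automatic in characteristic zero, which is all that is needed here), and the latter requires that $P$ be generated by the chosen $f_i$ so that $P/P^2$ is generated by their classes and the conormal map has exactly the rows of $J(P)$ as its image. Once these are granted — and the paper is explicitly content to quote \cite{KRT2013} for them — everything else is bookkeeping with orthogonal complements of row spaces and the definition of the dual matroid.
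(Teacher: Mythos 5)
Your argument is correct, and it follows the same route the paper itself takes: the paper offers no proof beyond citing \cite{KRT2013} together with the identification of the tangent space with $\ker J(P)$, and your writeup is precisely the standard expansion of that citation (characteristic-zero separability so that the algebraic matroid is the linear matroid of differentials, the conormal presentation $\Omega_{K/\ff}\cong K^n/R$ with $R$ the row space of $J(P)$, and row-space/orthogonal-complement duality of linear matroids). No gaps.
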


\begin{ex} Let $c$ be the map $p \mapsto (c_1(p), c_2(p), c_3(p))$ from the linear 2-compartment model in Example \ref{ex:2comp4}.  Then the Jacobian $J(c)$ is given by
$$
\begin{pmatrix}
-1 & -1 & 0 & 0 \\
a_{22} & a_{11} & -a_{21} & -a_{12} \\
0 & -1 & 0 & 0
\end{pmatrix}
$$
A basis for the kernel of this matrix is given by $(0, 0, a_{12}, -a_{21})^T$.  Here, linear independence is taken over $\Frac(\rr(\hat{c})[p]/P) \cong \rr(\hat{c})(a_{12},a_{21})$.  Thus, a vector matroid is given by:
$$
\begin{pmatrix}
0 & 0 & a_{12} & -a_{21}
\end{pmatrix}
$$
where the ground set $E=\left\{1,2,3,4\right\}$ and a set of circuits is given by $C=\left\{\left\{1\right\},\left\{2\right\},\left\{3,4\right\}\right\}$.  This implies that $a_{11}$ and $a_{22}$ are each algebraic over $\rr(\hat{c})$, which implies that $a_{11}$ and $a_{22}$ are each locally identifiable.  This also implies that $\left\{a_{12}, a_{21} \right\}$ is algebraically dependent over $\rr(\hat{c})$.

\end{ex}

\section*{Acknowledgments}
Nicolette Meshkat was partially supported by the Clare Boothe Luce Program from the Luce Foundation and by the David and Lucille Packard Foundation.
Zvi Rosen was partially supported by a Math+X Research 
Grant from the Simons Foundation.
Seth Sullivant was partially supported by the David and Lucille Packard Foundation
and by the US National Science Foundation (DMS 0954865 and 1615660).




\end{document}